\newcommand{\mathsym}[1]{{}}
\newcommand{\unicode}[1]{{}}
\newtheoremstyle{my theoremstyle}
{1.0em}                    
    {1.0em}                    
    {\itshape}                   
    {}                          
    {\scshape}                  
    {.}                          
    {.5em}                      
    {}  
\newtheoremstyle{dfn}
{1.0em}                    
    {1.0em}                    
    {}                   
    {}                           
    {\scshape}                   
    {.}                          
    {.5em}                       
    {}  
\theoremstyle{my theoremstyle}
   \newtheorem{thm}{Theorem}[section]
   \newtheorem{lem}[thm]{Lemma}
   \newtheorem{prop}[thm]{Proposition}
   \newtheorem{cor}[thm]{Corollary}
\theoremstyle{dfn}
\theoremstyle{remark}   
   \newtheorem{rmk}[thm]{{\scshape Remark}}
\newcommand{\Z}{\mathbb{Z}}
\newcommand{\Q}{\mathbb{Q}}
\newcommand{\C}{\mathbb{C}}
\newcommand{\dv}{\operatorname{div}}
\numberwithin{equation}{section}
\newcommand{\lm}{\lambda}
\newcommand{\cer}{\operatorname{Cer}}
\newcommand{\ch}{\operatorname{CH}}
\newcommand{\pr}{\operatorname{pr}}
\newcommand{\ord}{\operatorname{ord}}
\newcommand{\jac}{\operatorname{Jac}}
\newcommand{\dvs}{\operatorname{div}}
\date{\today}
\begin{document}
\title[The modified diagonal cycles of Hypergeometric curves]{The modified diagonal cycles of Hypergeometric curves}
\author[Payman Eskandari]{Payman Eskandari$^*$}
\address{${}^*$Department of Mathematics and Statistics, University of Winnipeg, Winnipeg, Manitoba, Canada R3B 2E9.}
\email{p.eskandari@uwinnipeg.ca}
\email{y.nemoto@uwinnipeg.ca}
\author[Yusuke Nemoto]{Yusuke Nemoto$^{*\diamond}$}
\address{${}^{\diamond}$Graduate School of Science and Engineering, Chiba University, 
Yayoicho 1-33, Inage, Chiba, 263-8522 Japan.}
\email{y-nemoto@waseda.jp}
\date{\today}
\keywords{Ceresa cycle; modified diagonal cycle; Abel-Jacobi map; Hypergeometric curve.}
\subjclass[2020]{14C25, 14F25, 14H40}

\begin{abstract}
For each $N\geq 2$, Asakura and Otsubo have recently introduced a smooth family of algebraic curves $\{X_{N,\lambda}\}_{\lambda \in \mathbb{P}^1\setminus \{0, 1, \infty\}}$ in characteristic 0 that is closely related to hypergeometric functions and the Fermat curve of degree $N$. In this paper, we study the Gross-Kudla-Schoen modified diagonal 1-cycles of these curves. We prove that if $p \ge 3$ is a prime, then for every $\lambda$ the Griffiths Abel-Jacobi image of the modified diagonal cycle of $X_{p,\lambda}$ is nontrivial for every cuspidal choice of a base point. On the other hand, we show that the modified diagonal cycle of $X_{3,\lambda}$ is torsion in the Chow group for every $\lambda$ and cuspidal base divisor. 
\end{abstract}

\maketitle

\section{Introduction}\label{Introduction}
Let $X$ be a smooth projective curve over $\C$ with Jacobian $\jac(X)$. Let $e$ be a divisor of degree 1 on $X$. Denoting $\Delta=\{(x, x) \mid x \in X\} \in \ch_1(X^2)$ where $\ch$ is the Chow group\footnote{Unless otherwise indicated, all the Chow groups in the paper are with coefficients in $\Z$. All constructions of the paper involving algebraic cycles factor through Chow groups. Thus with abuse of terminology, we use the same notation and language for an algebraic cycle and its class in the Chow group.}, the modified diagonal cycle $\Delta_{{\rm GKS},e}(X)$ of Gross-Kudla-Schoen based at $e$ (first introduced by Gross and Schoen in \cite{GS}) is the homologically trivial algebraic 1-cycle
$$\Delta_{{\rm GKS}, e}(X):=\Delta_{123} - \Delta_{12} -\Delta_{23}-\Delta_{13} +\Delta_{1}+\Delta_{2} +\Delta_{3} \in \ch_1(X^3). $$
Here,
\[
\Delta_{123}=\{(x, x,x) \mid x \in X\},
\]
\begin{alignat*}{3}
&\Delta_{12}=\operatorname{pr}^*_{1,2}(\Delta) \cdot \operatorname{pr}_3^* (e) , \ &\Delta_{23}=\operatorname{pr}^*_{2,3}(\Delta)\cdot \operatorname{pr}_1^* (e), \ & \Delta_{13}=\operatorname{pr}^*_{1,3}(\Delta)\cdot \operatorname{pr}_2^* (e), \\
&\Delta_{1}=\operatorname{pr}_{2, 3}^* (e \times e), & \Delta_{2}=\operatorname{pr}_{1, 3}^* (e \times e), \  \quad \quad \ & \Delta_{3}=\operatorname{pr}_{1, 2}^* (e \times e), 
\end{alignat*}
where $\operatorname{pr}_i$ (resp. $\operatorname{pr}_{i, j}$) denotes the projection from $X^3$ onto the $i$-th factor (resp. the product of the $i$-th and the $j$-th factors) and $\cdot$ is the intersection product.
Another homologically trivial 1-cycle naturally associated with the pair $(X,e)$ is the Ceresa cycle (defined in \cite{Ceresa})
\[
\cer_e(X):= X_e-(-1)_\ast X_e \in \ch_1(\jac(X)),
\]
where $X_e$ is the image of $X$ under the map $X\rightarrow \jac(X)$ defined by $x \mapsto [x]-e$. 
Thanks to the works of Colombo and van Geemen \cite{CG}, S-W. Zhang \cite{Zhang}, Laga and Shnidman \cite{LS} and very recently Lagarde et al \cite{LMPRS}, the two cycles $\Delta_{{\rm GKS}, e}(X)$ and $\cer_e(X)$ are intimately related.
 In particular, one is torsion in the Chow group if and only if the other is. In addition, one knows the following two general facts about them: First, if $X$ is hyperelliptic and $e$ is supported on Weierstrass points, then the two cycles are torsion \cite{GS}. Second, if $X$ is a generic complex curve of genus $\geq 3$, then the two cycles are of infinite order for any $e$ \cite{Ceresa}.
\medskip\par 
Given a {\it specific} non-hyperelliptic curve $X$ of genus $\geq 3$, it can be quite difficult to prove anything nontrivial about the modified diagonal or the Ceresa cycle. We refer the reader to the works \cite{Harris} of Harris, \cite{Bloch} of Bloch, \cite{Kimura} of Kimura, \cite{T1}-\cite{T3} of Tadokoro, and \cite{Otsubo} of Otsubo for some results in this direction (see also \cite{EM2} for a survey of some of these results and the methods used). More recently, there have been several other developments on this topic using techniques different from those used in the prior work mentioned. These include developments in the following two directions:
\medskip\par 
\begin{itemize}[wide]
\item[(1)] {\it Infinitude of order in the Chow group}: Using the work of Darmon-Rotger-Sols \cite{DRS} on Chow-Heegner points, the first-named author and K. Murty showed in \cite{EM} that for every positive integer $N$ that has a prime divisor greater than 7, the Ceresa cycle of the Fermat curve $F(N)$ of degree $N$ is {\it non-torsion} in the Chow group for every choice of $e$. This work was generalized by the second author in \cite{Nemoto} to an infinite number of non-hyperelliptic simple Fermat quotients.\footnote{In \cite{EM} and \cite{Nemoto} $e$ was a point of the curve, but the proofs also work if $e$ is a divisor of degree 1.} Using the same method, Kerr-Li-Qiu-Yang \cite{KLQY} and Lupoian-Rawson \cite{LR} have proved that the Ceresa cycles of an infinite number of modular curves are non-torsion in the Chow group.
\medskip\par 
\item[(2)] {\it Non-hyperelliptic curves with torsion Ceresa cycles}: Beauville \cite{Beauville} has given an explicit non-hyperelliptic curve over $\Q$ for which the complex Abel-Jacobi image of the Ceresa cycle is torsion. Beauville's argument has been further developed by Beauville-Schoen \cite{BS}, Qui-Zhang \cite{QZ} and Laga-Shnidman \cite{LS2}. There are now several examples of non-hyperelliptic curves with torsion modified diagonal and Ceresa cycles in the Chow group (see the papers just cited), including an example of a family of such curves in genus 4 and such an example in genus 5 \cite{QZ}. Qui-Zhang \cite{QZ} and Laga-Shnidman \cite{LS2} have given sufficient cohomological criteria that would guarantee Ceresa cycles are torsion.\footnote{Related to both themes, there is also the work \cite{LS} of Laga and Shnidman, where they introduce a family of curves of genus 3 where they can determine exactly when the Ceresa cycle is torsion and when it is not torsion. Both phenomena occur in infinitely many fibers of their family.}
\end{itemize}
\medskip\par 
This paper fits in the two themes above. We consider the modified diagonal cycles of the family of complex algebraic curves
$$X_{N, \lm} : (x_2^N-x_1^N)(y_2^N-y_1^N)=\lm x_1^N y_1^N $$
in $\mathbb{P}^1\times \mathbb{P}^1=\{[x_1:x_2], [y_1:y_2]\}$ over the $\lambda$-line. This family has recently been introduced by Asakura and Otsubo \cite[Section 2.1]{AO}. Following them, we refer to this family as the family of {\it hypergeometric curves}, since the periods of $X_{N, \lm}$ are given by the values of hypergeometric functions \cite[Theorem 2.7]{AO}. The fiber $X_{N, 1}$ is the Fermat curve of degree $N$, and for $\lambda\in \mathbb{P}^1 \setminus \{0, 1, \infty\}$, $X_{N, \lm}$ is a smooth projective curve of genus $(N-1)^2$.
\medskip\par 
We prove two main results.
The first result is the following (see Theorem \ref{thm 1, more general} for a slightly more general statement):

\begin{thm} \label{main:1}
Let $p \ge3$ be a prime and $e \in X_{p, \lm}$ be a cusp (i.e., a point satisfying $x_1x_2y_1y_2=0$). 
Then for every $\lm \in \mathbb{P}^1 \setminus \{0, 1, \infty\}$ and positive integer $l \leq (p-1)/2$, we have 
$$l \cdot \Phi_1(\Delta_{{\rm GKS}, e}(X_{p, \lm})) \neq 0, $$
where $\Phi_1(\Delta_{{\rm GKS}, e}(X_{p, \lm}))$ is the complex Abel-Jacobi image of $\Delta_{{\rm GKS}, e}(X_{p, \lm})$ in the intermediate Jacobian of the integral Hodge structure $H_3(X_{p, \lambda}^3)$. In particular, the modified diagonal cycle  $\Delta_{{\rm GKS}, e}(X_{p, \lm})$ is nontrivial in the integral Chow group $\ch_1(X_{p, \lambda}^3)$ for every $\lambda\in \mathbb{P}^1 \setminus \{0, 1, \infty\}$.
\end{thm}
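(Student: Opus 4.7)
The plan is to exploit the action of $G = \mu_p \times \mu_p$ on $X_{p,\lm}$ (multiplication by $p$-th roots of unity on the two pairs of homogeneous coordinates) in order to decompose the Abel-Jacobi image into character eigencomponents, and then to evaluate one such component explicitly using the hypergeometric period techniques of Asakura-Otsubo.

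First, I would set up the equivariant decomposition. The diagonal action of $G^3$ on $X_{p,\lm}^3$ is compatible with Abel-Jacobi, and by K\"unneth the only contributions to $H^3(X_{p,\lm}^3)$ that can detect a homologically trivial 1-cycle come from $H^1\otimes H^1\otimes H^1$. Under $G^3$, only characters $\underline{\chi} = (\chi_1,\chi_2,\chi_3)$ with $\chi_1\chi_2\chi_3$ trivial contribute; the relevant $F^2$-part of such an eigenspace is spanned by explicit tensor products of the Asakura-Otsubo eigen-differentials $\omega_{a_i,b_i}$. The cuspidal choice of $e$ is convenient here: the cusps form a $G$-stable locus, so $\Delta_{{\rm GKS},e}$ projects cleanly onto each eigencomponent with no additional correction.

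Second, I would evaluate the $\underline{\chi}$-projection of $\Phi_1(\Delta_{{\rm GKS},e})$ explicitly. Choose topological 2-chains $\gamma_x$ on $X_{p,\lm}$ with $\partial\gamma_x = [x]-[e]$ and build from them a 3-chain $\Gamma$ bounding $\Delta_{{\rm GKS},e}$; the alternating-sign pattern in the modified diagonal causes massive cancellation in the test integral $\int_\Gamma \omega_{a_1,b_1}\boxtimes\omega_{a_2,b_2}\boxtimes\omega_{a_3,b_3}$, leaving a length-two iterated period integral on $X_{p,\lm}$. Extending the hypergeometric analysis of \cite{AO}, this iterated integral should admit an explicit closed form in terms of values of ${}_3F_2$ and ordinary hypergeometric periods. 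The claim then reduces to showing that the resulting class in the $\underline{\chi}$-eigenpart of the intermediate Jacobian is a nonzero element of $p$-torsion, which immediately makes $l\cdot \Phi_1$ nonzero for every $l\le (p-1)/2 < p$.

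The main obstacle is this last non-vanishing, uniformly in $\lm\in\mathbb{P}^1\setminus\{0,1,\infty\}$. For the Fermat fiber $\lm=1$ it reduces to a Jacobi-sum computation modulo $p$, controllable by the methods of Otsubo \cite{Otsubo} and Tadokoro \cite{T1,T2,T3}. For general $\lm$ two complementary strategies seem promising: either (i) one pushes the explicit hypergeometric evaluation through to a direct non-vanishing calculation modulo periods, using the full strength of \cite{AO}; or (ii) one invokes rigidity---the $\underline{\chi}$-component gives a horizontal section of a local system of abelian groups over the $\lm$-line, so fiberwise non-vanishing modulo a bounded order propagates globally from a single specialization. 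The bound $l\le (p-1)/2$ rather than $l\le p-1$ is expected to arise naturally from a Poincar\'e duality symmetry $\underline{\chi}\leftrightarrow\underline{\chi}^{-1}$ pairing the eigencomponent with its complex conjugate.
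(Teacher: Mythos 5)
The decisive step of your plan is missing, and you say so yourself: everything reduces to showing that the relevant iterated/period integral is nonzero modulo the image of the integral period lattice, uniformly in $\lambda\in\mathbb{P}^1\setminus\{0,1,\infty\}$, and neither of your two fallback strategies closes this. Strategy (i) is exactly the hard open computation: even at Fermat points such non-vanishing modulo periods is only known in special cases (Harris, Otsubo, Tadokoro), and for general $\lambda$ the relevant lattice of hypergeometric periods is not under control. Strategy (ii) fails at the starting line: $\lambda=1$ is \emph{not} in the parameter space; the fiber there is the Fermat curve, whose genus is $(p-1)(p-2)/2$ rather than $(p-1)^2$, i.e.\ a degenerate member of the family, so there is no smooth ``Fermat fiber'' to specialize to, and one would need a genuine degeneration analysis. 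Moreover, your rigidity argument presupposes that the $\underline{\chi}$-component of the normal function is torsion of bounded order at \emph{every} fiber (only then is it a section of a finite covering, with open-closed vanishing locus); an arbitrary normal function is not a flat section of a local system, and the asserted $p$-torsionness is precisely what has not been proved.

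There is also an internal tension in the mechanism you propose. Splitting the \emph{integral} intermediate Jacobian into $\underline{\chi}$-eigencomponents requires inverting $|G^3|=p^6$ (at least $p$), and that kills exactly the $p$-torsion classes you hope to exhibit; detecting a nonzero $p$-torsion class through character projectors needs care your sketch does not supply. Note also that if the class really were nonzero $p$-torsion, you would conclude $l\cdot\Phi_1\neq 0$ for all $l\le p-1$, so the bound $(p-1)/2$ would not ``arise naturally'' from your setup --- a sign that your mechanism is not the one actually driving the statement. The paper avoids period computations altogether: it pushes $\Delta_{{\rm GKS},e}$ forward along the correspondence $\Gamma_\alpha\times\Delta(X)$ attached to the extra involution $\alpha(x,y)=\left(\frac{1}{\rho x},\frac{1}{\rho y}\right)$, getting the $0$-cycle $[P]+[Q]-2[\alpha(e)]$ on the curve, and then proves that $l([P]+[Q]-2[e'])$ is not principal for any cusp $e'$ by a Gross--Rohrlich style Riemann--Roch computation (Lemma \ref{key1} and Proposition \ref{nontriv}); the hypothesis $l\le(p-1)/2$ enters there as the pole-order bound $2l\le p-1$ needed to identify $\mathscr{L}\bigl(2l\sum_i[c_{1,i}]\bigr)$ with polynomials in $x$. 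If you want to salvage your approach, you would have to actually carry out the non-vanishing modulo periods, which is a substantially harder problem than the one the paper solves.
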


It has been observed in \cite[Proposition 2.2]{LS} that given any smooth projective complex curve $X$ of genus $g$ with canonical divisor $K(X)$, if $e$ is a degree 1 divisor on $X$ such that $(2g-2)[e]\neq K(X)$ in $\ch_0(X)\otimes\Q$, then the modified diagonal cycle of $X$ at $e$ is non-torsion in the Chow group. For a cuspidal choice of a base point $e$ as in Theorem \ref{main:1}, one has $(2(p-1)^2-2)[e]=K(X_{p,\lambda})$ in $\ch_0(X_{p,\lambda})\otimes\Q$ (see Corollary \ref{canonical}). Thus Theorem \ref{main:1} does not follow from \cite[Proposition 2.2]{LS}.

To our knowledge, Theorem \ref{main:1} provides the first explicit example of a smooth non-constant family $\mathscr{X}\rightarrow S$ of curves of arbitrarily large genus over a base $S$ where the modified diagonal cycle is nontrivial in the entire family. Since the hypergeometric curve $X_{p,\lm}$ is defined over $\Q(\lm)$, in particular, the result gives infinitely many curves of a given genus $(p-1)^2$ (for every prime $p\geq 3$) over any subfield of $\C$ with nontrivial modified diagonal cycles. By the recent result \cite[Proposition 5.6]{LMPRS}, Theorem \ref{main:1} also implies the nontriviality of the Ceresa cycles of hypergeometric curves $X_{p,\lambda}$ for $p\geq 5$ at cuspidal base points (see Example 5.15 in loc. cit.).

Unfortunately, we do not know whether $\Delta_{{\rm GKS}, e}(X_{p, \lm})$ is non-torsion or not for an arbitrary $p$.
However, for $p=3$ we can prove the following:

\begin{thm} \label{main:2}
For every $\lm \in \mathbb{P}^1 \setminus \{0, 1, \infty\}$ and every choice of a base divisor $e$ of degree 1 on $X_{3, \lm}$ supported on the cusps, the modified diagonal cycle $\Delta_{{\rm GKS},e}(X_{3, \lm})$ (and hence the Ceresa cycle $\cer_e(X_{3, \lm})$) is torsion in the Chow group.
\end{thm}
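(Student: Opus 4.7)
The plan is to leverage the rich automorphism group of $X_{3,\lm}$---namely $(\mu_3\times\mu_3)\rtimes S_2$, coming from cube-root-of-unity scaling on $x_1$ and $y_1$ and the involution swapping the two $\mathbb{P}^1$-factors---to show that $\jac(X_{3,\lm})$ is isogenous to a power of the CM elliptic curve $E_0$ with $j(E_0)=0$, and then to apply a cohomological criterion for triviality of Ceresa cycles in the style of Qiu-Zhang \cite{QZ} and Laga-Shnidman \cite{LS2}. Since the modified diagonal cycle and the Ceresa cycle agree modulo torsion by \cite{CG,Zhang}, it suffices to treat the Ceresa cycle.

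First I would work out the isogeny decomposition of $\jac(X_{3,\lm})$. Using the explicit holomorphic differentials on $X_{N,\lm}$ described in \cite[Section~2]{AO} and decomposing $H^1(X_{3,\lm},\Q)$ into isotypic components for the automorphism group, the Hodge structure splits into rank-$2$ rational summands, each preserved by $\Z[\zeta_3]$ and with Hodge numbers $(1,1)$. Each such summand corresponds to a $1$-dimensional abelian variety with CM by $\Z[\zeta_3]$, which is necessarily isogenous to $E_0$. Hence $\jac(X_{3,\lm})\sim E_0^4$ for every $\lm\in\mathbb{P}^1\setminus\{0,1,\infty\}$. This rigidity is the key structural fact: the Jacobian is a CM abelian variety (even though $X_{3,\lm}$ varies in a one-parameter family), and in particular the Hodge conjecture holds for it by Faltings-Pohlmann.

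Next I would apply the cohomological triviality criterion. Under the Beauville decomposition $\ch_1(E_0^4)_\Q=\bigoplus_s \ch^3_{(s)}$, the Ceresa cycle lies in the odd-weight components $\ch^3_{(1)}\oplus\ch^3_{(3)}$, and its cohomology class decomposes further under the $(\mu_3\times\mu_3)\rtimes S_2$-action. A direct character-theoretic computation shows that the relevant Hodge class sits in an isotypic component containing no nontrivial Hodge classes, so the Ceresa cycle is cohomologically trivial. CM-theoretic arguments (in the spirit of \cite{QZ,LS2,Beauville,BS}) then upgrade cohomological triviality to triviality modulo torsion in the Chow group, yielding that $\cer_{e_0}(X_{3,\lm})$, and hence $\Delta_{\rm GKS,e_0}(X_{3,\lm})$, is torsion for some specific base point $e_0$.

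The main obstacle is extending the torsion conclusion from a specific $e_0$ to every base point $e\in X_{3,\lm}$. The difference $\cer_e-\cer_{e_0}$ can be written in terms of translated cycles on $\jac(X_{3,\lm})\sim E_0^4$ depending linearly on $[e]-[e_0]$, and the corresponding correction cycles on $X_{3,\lm}^3$ push forward to classes in $\ch_\ast(E_0^n)_\Q$ for $n\le 4$. The CM structure on each factor of $E_0^n$ is again used to see that these corrections are torsion. This is the delicate technical step, requiring a careful bookkeeping of isotypic components and of the Bloch-Beilinson-type filtration for products of $E_0$, but once established it completes the argument for arbitrary $e$.
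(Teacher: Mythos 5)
There is a genuine gap, and it is at the very foundation of your argument: the claim that $\jac(X_{3,\lm})$ is isogenous to $E_0^4$ for \emph{every} $\lm\in\mathbb{P}^1\setminus\{0,1,\infty\}$ is false. The character eigenspaces $H^1_{\rm dR}(X_{3,\lm})^{\chi_3^{a,b}}$ are only defined over $\Q(\zeta_3)$, not over $\Q$; the $\Q$-rational isotypic pieces pair each character with its conjugate (e.g.\ $\chi_3^{1,1}$ with $\chi_3^{2,2}$) and have rank $4$, so they correspond to abelian surfaces with $\Z[\zeta_3]$-multiplication, which are \emph{not} generically split as $E_0^2$ (that would require CM by a quartic field, not by $\Q(\zeta_3)$). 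More decisively, the family $X_{3,\lm}$ is non-isotrivial --- its periods are genuinely varying hypergeometric functions of $\lm$ (Asakura--Otsubo), and e.g.\ the genus-$2$ quotients $C_{3,\lm}^{1,1}$ vary with $\lm$ --- whereas there are only countably many genus-$4$ curves whose Jacobian is isogenous to the fixed abelian variety $E_0^4$. So at most countably many fibers can have CM Jacobian, and everything downstream that relies on the CM structure (Hodge conjecture for CM abelian varieties, the ``CM-theoretic upgrade'' from cohomological information to torsion in the Chow group, and the base-point-change step) collapses. A further soft spot: the Ceresa and modified diagonal cycles are \emph{always} homologically trivial, so ``cohomological triviality'' is not the content one needs; passing from a cohomological vanishing to torsion in $\ch_1$ is exactly what the criteria of Qiu--Zhang and Laga--Shnidman are for, and they require a specific hypothesis, not a general ``CM implies Chow torsion'' principle (which is not a theorem).

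The paper's actual proof is much more elementary and avoids any statement about the isogeny type of the Jacobian: it applies the Laga--Shnidman criterion (Theorem \ref{criteria}), which says that if $(\wedge^3 H^1_{\rm dR}(X))^{\operatorname{Aut}(X)}=0$ then $\Delta_{{\rm GKS},e}(X)$ is torsion for every base point $e$ (so the base-point issue you worry about never arises). It then checks this vanishing using only the subgroup $G_3=\Z/3\Z\times\Z/3\Z$: an invariant vector in $\wedge^3 H^1_{\rm dR}$ must be supported on triples of characters $\chi_3^{a_i,b_i}$ with $a_i,b_i\in\{1,2\}$ and $\sum_i a_i\equiv\sum_i b_i\equiv 0\pmod 3$, which forces $a_1=a_2=a_3$ and $b_1=b_2=b_3$; but each eigenspace $H^1_{\rm dR}(X_{3,\lm})^{\chi_3^{a,b}}$ is $2$-dimensional, so any wedge of three of its elements vanishes. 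Hence $(\wedge^3 H^1_{\rm dR}(X_{3,\lm}))^{G_3}=0$ and the theorem follows. If you want to salvage your approach, replace the isogeny claim by this representation-theoretic computation, which holds uniformly in $\lm$ precisely because it only uses the $G_3$-action, not the (varying) Hodge-theoretic fine structure of the fibers.
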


The curve $X_{3, \lm}$ is not hyperelliptic (see Proposition \ref{nonhyperelliptic}), so this gives an explicit example of a family of non-hyperelliptic curves of genus 4 with torsion modified diagonal cycles. Another example of such a family has been given by Qiu-Zhang \cite{QZ}. The proof of Theorem \ref{main:2} uses the cohomological vanishing criteria of Qiu-Zhang \cite[Theorem 1.2.1]{QZ} and Laga-Shnidman \cite[Theorem A]{LS2}.\footnote{We use the latter criterion to make the argument shorter, but the former criterion would also suffice in this case.} 
We expect that for $p\geq 5$ and a cuspidal divisor $e$, the cycle $\Delta_{{\rm GKS},e}(X_{p, \lm})$ is of infinite order for ``most" (or perhaps all) $\lambda$.

\subsection{About the proof of Theorem \ref{main:1}}\label{subsection: about the proofs} Here we say a few words about the proof of Theorem \ref{main:1}. In particular, we will explain why we only establish a lower bound on the order of the modified diagonal cycle, rather than prove its infinitude of order.
\medskip\par 
We follow the approach of \cite{EM}. Given a pointed smooth projective curve $(X,e)$, following an idea of Darmon-Rotger-Sols \cite{DRS}, one considers the pushforward of $\Delta_{{\rm GKS},e}(X)$ along a correspondence $Z\in\ch_2(X^4)$. This pushforward is a point in $\jac(X)$. 
When $X$ is the Fermat curve $F(p)$ for a prime number $p>7$ and $e$ is a cusp, for the choice
\[
Z= \Gamma_\alpha\times \Delta\in \ch_2(F(p)^4)
\]
where $\Delta\in \ch_1(F(p)^2)$ is the diagonal and $\Gamma_\alpha$ is the graph of the automorphism $\alpha \colon  (x:y:z)\mapsto (-y:z:x)$ of $F(p): x^p+y^p=z^p$, the pushforward of $\Delta_{{\rm GKS},e}$ turns out to be exactly the ``Gross-Rohrlich point" in $\jac(F(p))$ shown previously in \cite[Theorem 2.1]{GR} to be of infinite order, hence giving the result in \cite{EM}. 
\medskip\par 
To adapt this argument to the case of hypergeometric curves and prove (ideally) infinitude of order of $\Delta_{{\rm GKS},e}(X_{p,\lambda})$ for a prime $p$, we need a correspondence in $Z\in\ch_2(X_{p,\lambda}^4)$ that pushes $\Delta_{{\rm GKS},e}$ forward to a non-torsion point of $\jac(X_{p,\lambda})$. Otsubo has observed that for every $\lambda\in \mathbb{P}^1\setminus \{0,1,\infty\}$ the curve $X_{p,\lambda}$ has an automorphism of order 2 {\it different from} $(x,y)\mapsto (y,x)$, where $x=x_1/x_2$ and $y=y_1/y_2$ (see \S \ref{sec: proof of Thms 1 & 2}). Working with this automorphism, we define $Z$ similarly to the case of Fermat curves. One is then led to consider the pushforward of $\Delta_{{\rm GKS},e}(X_{p,\lm})$ along this correspondence, which is a point in $\jac(X_{p,\lambda})$. This is the analogue of the Gross-Rohrlich point in the previous argument. 
\medskip\par 
To complete this argument, we need an analogue of Gross-Rohrlich's \cite[Theorem 2.1]{GR} for hypergeometric curves. For this, we go back to the original proof of Gross and Rohrlich. The key ingredients of the argument for \cite[Theorem 2.1]{GR} are the following:
 \begin{itemize}[wide]
 \item[(1)] A careful analysis of the torsion part of $\jac(F(p))(\Q)$, which can be done very successfully using the zeta functions of Fermat curves (see \cite[Theorem 1.1]{GR}). 
 \item[(2)] The calculation of the space of meromorphic functions on $F(p)$ that are regular outside the cusps and whose possible poles at the cusps are at most of a given order (see \cite[Lemma 2.2]{GR}). 
\end{itemize} 
\medskip\par 
Unfortunately, the zeta functions of hypergeometric curves do not seem easy to understand, so we do not have (1) at our disposal. Instead, we prove a version of (2) (see Lemma \ref{key1}) that together with an elementary argument gives us a weak version of Gross and Rohrlich's \cite[Theorem 2.1]{GR} (see Proposition \ref{nontriv}) which is enough to prove Theorem \ref{main:1}. The lack of a good understanding of the torsion on the Jacobians of hypergeometric curves is the reason that our Theorem \ref{main:1} only establishes nontriviality with some lower bound on the order, rather than infinitude of order.

\subsection{Outline of the paper} We end this Introduction with an outline of the paper. In \S \ref{Background} below we recall some background and fix some notation. In \S \ref{sec: new results about HG curves} we prove certain new useful results about hypergeometric curves. We then prove Theorems \ref{main:1} and \ref{main:2} in \S \ref{sec: proof of Thms 1 & 2}.

\section{Preliminaries} \label{Background}
\subsection{Reminders on the Abel-Jacobi maps}
We start by recalling some basic constructions in Hodge theory (see \cite{Voisin} for a reference). 
Given a pure $\Z$-Hodge structure $H$, we denote by $H_{\Z}$ (resp. $H_{\C}$) the underlying free abelian group (resp. $H_\Z \otimes_{\Z} \C$). The notation $H^{\vee}$ denotes the dual of $H$ (as a Hodge structure).

Given a pure $\Z$-Hodge structure $H$ of odd weight $2k-1$, set
$$JH:=H_{\C}/(F^k H_{\C} + H_{\Z}).$$
Here, $F^{\bullet}$ is the Hodge filtration on $H_{\C}$. 
If $X$ is a smooth projective variety over $\C$, the cohomology group $H^n(X, \Z)$ underlies a pure $\Z$-Hodge structure of weight $n$, which we write as $H^n(X)$. Let
$$J_k(X):=J(H^{2k+1}(X)^{\vee}) \simeq (F^{k+1}H^{2k+1}(X, \C))^{\vee}/H_{2k+1}(X, \Z)$$
be the $k$-th Griffiths's intermediate Jacobian. Denoting the homologically trivial subgroup of $\ch_k(X)$ by $\ch_k(X)_{\rm hom}$, one has the Abel-Jacobi map
$$\Phi_k\colon \ch_k(X)_{\rm hom} \to J_k(X); \quad Z \mapsto \left(\eta \mapsto \int_{\Gamma} \eta \right)$$
for any $\eta \in F^{k+1}H^{2k+1}(X, \C)$, where $\Gamma$ is a topological $(2k+1)$-chain such that $\partial \Gamma=Z$.

\subsection{A criterion for finiteness of the order of the modified diagonal cycle}
Let $X$ be a smooth projective curve of genus $g \ge 3$ over $\C$. Let $\jac(X)$ and $K(X)$ be respectively the Jacobian and canonical divisor of $X$. Given a divisor $e$ of degree 1 on $X$, let
\[
\Delta_{{\rm GKS}, e}(X) \in \ch_1(X^3)_{\rm hom}
\]
be the modified diagonal cycle defined in \cite{GS} (see the \S \ref{Introduction} to recall the definition). We refer to this as the Gross-Kudla-Schoen modified diagonal cycle with base divisor (or point, if $e$ is a point) $e$. Gross and Schoen showed that this cycle is homologically trivial \cite[Proposition 3.1]{GS}. One also has the Ceresa cycle
\[
\cer_e(X):=X_e-(-1)_\ast X_e\in \ch_1(\jac(X))_{\rm hom}
\]
(see \S \ref{Introduction}). 
By \cite[Proposition 2.2]{LS}, if $\Delta_{{\rm GKS}, e}(X)$ (resp. $\cer_e(X)$) is torsion in the Chow group, then $(2g-2)e-K(X)$ is a torsion point of $\jac(X)\cong \ch_0(X)_{\mathrm{hom}}$. Thus the question of whether $\Delta_{{\rm GKS}, e}(X)$ (resp. $\cer_e(X)$) is torsion in the Chow group is reduced to the case when $(2g-2)e=K(X)$ in $\ch_0(X)_{\mathrm{hom}}\otimes\Q$. For such a base divisor $e$, by \cite[Theorem 1.5.5]{Zhang}, the finiteness of the orders of $\cer_e(X)$ and $\Delta_{{\rm GKS}, e}(X)$ in the corresponding Chow groups are equivalent. Qui and W. Zhang \cite[Theorem 1.2.1]{QZ} have given a sufficient cohomological criterion for this finiteness. This criterion has been strengthened by Laga and Shnidman in \cite[Theorem A]{LS2}. We recall the latter criterion, stated for $\Delta_{{\rm GKS}, e}(X)$ in terms of the de Rham cohomology:

\begin{thm}[Laga and Shnidman \cite{LS2}] \label{criteria}
Let $X$, $g$, and $K(X)$ be as above. Let $e$ be a divisor of degree 1 on $X$ such that $(2g-2)e=K(X)$ in $\ch_0(X)_{\mathrm{hom}}\otimes\Q$.
 Let $\operatorname{Aut}(X)$ be the automorphism group of $X$. Consider the action of $\operatorname{Aut}(X)$ on $\wedge^3 H^1_{\rm dR}(X)$ through its natural action on each factor of the de Rham cohomology $H^1_{\rm dR}(X)$, and let $(\wedge^3 H^1_{\rm dR}(X))^{\operatorname{Aut}(X)}$ be the fixed part under this action. 
If $(\wedge^3 H^1_{\rm dR}(X))^{\operatorname{Aut}(X)}=0$, then $\Delta_{{\rm GKS}, e}(X)$ is torsion in the Chow group.
\end{thm}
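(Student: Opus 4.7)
By Zhang's theorem \cite[Theorem 1.5.5]{Zhang} recalled just above, torsion-ness of $\Delta_{\mathrm{GKS},e}(X)$ in $\ch_1(X^3)$ is equivalent to torsion-ness of $\cer_e(X)$ in $\ch_1(\jac(X))$. So the plan is to work with the Ceresa cycle throughout, establish its $\operatorname{Aut}(X)$-invariance modulo torsion, and then use the hypothesis on $\wedge^3 H^1_{\mathrm{dR}}(X)$ to kill an appropriate cohomological refinement of its class.

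The first step is to show that $\cer_e(X)$ is $\operatorname{Aut}(X)$-invariant modulo torsion. I would begin by verifying the classical fact that the class of $\cer_e(X)$ in $\ch_1(\jac(X))_\Q$ is independent of the base point $e$: the two subvarieties $X_e$ and $X_{e'}$ of $\jac(X)$ differ by a translation, and $X_e$ lies in the Beauville piece of $\ch_1(\jac(X))_\Q$ on which translations act trivially modulo torsion. Since for every $\sigma\in\operatorname{Aut}(X)$ one has $\sigma_\ast \cer_e(X) = \cer_{\sigma(e)}(X)$, this gives $\sigma_\ast\cer_e(X)\equiv\cer_e(X)$ modulo torsion.

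The second step is to compute the consequence of this invariance on the Griffiths Abel-Jacobi image. Via the K\"unneth isomorphism $H^\ast(\jac(X),\Z)\cong \wedge^\ast H^1(X,\Z)$, the intermediate Jacobian $J_1(\jac(X))$ is built from $\wedge^3 H^1(X,\C)$; by the invariance from the previous step, $\Phi_1(\cer_e(X))$ lies in the $\operatorname{Aut}(X)$-fixed subtorus, which under de Rham--Betti comparison is controlled by $(\wedge^3 H^1_{\rm dR}(X))^{\operatorname{Aut}(X)}$. The hypothesis makes this space vanish, so $\Phi_1(\cer_e(X))=0$.

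The decisive --- and hardest --- step is to upgrade Abel-Jacobi triviality to Chow-torsion. In general, Bloch--Beilinson-type obstructions prevent detecting Chow-torsion via the transcendental Griffiths invariant alone, so I would replace $\Phi_1$ on the Beauville piece containing $\cer_e(X)$ by a finer realization (of de Rham / Deligne-cohomological nature) whose restriction to $\operatorname{Aut}(X)$-fixed classes is injective modulo torsion into a subquotient of $(\wedge^3 H^1_{\rm dR}(X))^{\operatorname{Aut}(X)}$. This is where I expect the essential novelty of the Laga--Shnidman criterion to enter, and where the specific structure of Ceresa cycles on Jacobians (together with an averaging of $\sigma_\ast$ over $\operatorname{Aut}(X)$) must be exploited. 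Given such an enhanced realization, the hypothesis immediately forces $\cer_e(X)$ to be torsion, and hence by Zhang's theorem so is $\Delta_{\mathrm{GKS},e}(X)$.
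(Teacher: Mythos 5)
First, a point of comparison: the paper does not prove this statement at all --- it is quoted verbatim as \cite[Theorem A]{LS2} --- so there is no internal proof to measure yours against. Judged on its own terms, your proposal is a strategy outline rather than a proof, and the gap sits exactly where you acknowledge it: the passage from Abel--Jacobi triviality to torsion in the Chow group. Vanishing (even modulo torsion) of the Griffiths Abel--Jacobi image of a homologically trivial $1$-cycle on a threefold does \emph{not} imply that the cycle is torsion in $\ch_1$; the Abel--Jacobi map can have an enormous kernel. The ``finer realization whose restriction to $\operatorname{Aut}(X)$-fixed classes is injective modulo torsion into a subquotient of $(\wedge^3H^1_{\rm dR}(X))^{\operatorname{Aut}(X)}$'' that you postulate is essentially a restatement of the theorem to be proved, so steps one and two do no real work: as written they are consequences of, not inputs to, the result.

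For the record, the actual argument of Qiu--Zhang and Laga--Shnidman is motivic and bypasses intermediate Jacobians entirely. Using the Deninger--Murre/Beauville decomposition of the Chow motive of $X^3$ (equivalently, of $\jac(X)$), one shows that the class of $\Delta_{{\rm GKS},e}(X)$ in $\ch_1(X^3)\otimes\Q$, after discarding the summands visibly controlled by zero-cycles on $X$, lies in $\ch_1$ of the summand cut out by $\wedge^3\mathfrak{h}^1(X)$; averaging over the finite group $\operatorname{Aut}(X)$ places it in $\ch_1$ of the submotive $(\wedge^3\mathfrak{h}^1(X))^{\operatorname{Aut}(X)}$. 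The decisive input is then Kimura--O'Sullivan finite-dimensionality of motives of abelian type: a finite-dimensional motive with vanishing cohomological realization is the zero motive and hence has vanishing Chow groups with $\Q$-coefficients. This is the mechanism converting the hypothesis $(\wedge^3H^1_{\rm dR}(X))^{\operatorname{Aut}(X)}=0$ into a Chow-theoretic vanishing, and it has no analogue in the Hodge-theoretic framework you set up. Your first step also needs more care: $[X_e]$ has components in several pieces of the Beauville decomposition and translations do not act trivially on the odd pieces, so base-point independence of $\cer_e(X)$ in $\ch_1(\jac(X))\otimes\Q$ is not simply a matter of $X_e$ ``lying in'' a translation-invariant piece; what is true (and what the motivic projection accomplishes) is that the relevant component of the class is insensitive to $e$ up to contributions controlled by $(2g-2)[e]-K(X)$.
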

Note that \cite{LS2} works with Chow groups with coefficients in $\Q$, hence the assertion is stated as a vanishing statement in that paper. Also note that {\cite[Theorem A]{LS2}} is slightly stronger than the statement recorded above (and needed in this paper), as Laga and Shnidman only require the vanishing of the subspace of the \emph{primitive part} of $\wedge^3 H^1_{\rm dR}(X)=H^3_{\mathrm{dR}}(\jac(X))$ fixed by $\operatorname{Aut}(X)$.

\subsection{Hypergeometric curves}\label{more on hypergeometric curves}
 In this subsection we review the definition and some basic facts about the hypergeometric family of curves. All of what we will say in this subsection is due to Asakura and Otsubo \cite{AO}. The notation introduced in this subsection will be used throughout the paper.

Let $\mathbb{P}^1$ be the projective $\lm$-line over $\C$ and set $S=\mathbb{P}^1 \setminus \{0, 1, \infty\}$. 
Let $N \geq 2$ be a positive integer. 
Let
 $$f_N \colon \mathscr{X}_N \to S$$
 be the projective curve over $S$ defined in $\mathbb{P}^1 \times \mathbb{P}^1=\{([x_1 : x_2], [y_1: y_2])\}$ over $S$ by 
 \begin{align*}
 (x_2^N-x_1^N)(y_2^N-y_1^N)= \lm x_1^N y_1^N.
 \end{align*}
Its affine equation is 
 $$(1-x^N)(1-y^N)=\lm x^N y^N, $$
 where $x=x_1/x_2$, $y=y_1/y_2$. 
 For $\lm \in S$, let $X_{N, \lm}$ denote the fiber of $f_N$ over $\lm$. The curve $X_{N, \lm}$ is a smooth projective curve over $\C$.
Let $\mu_N$ be the group of $N$-th roots of unity. 
We set 
$$G_N=\Z/N\Z \times \Z/N\Z$$
and denote the element $(r, s) \in G_N$ by $g_N^{r, s}$. 
Fix a primitive $N$-th root of unity $\zeta_N$. 
 Then $G_N$ acts on $X_{N, \lm}$ as  
 $$g_N^{r, s} \cdot (x, y)= (\zeta_N^r x, \zeta_N^s y)  $$
 in the affine coordinates.
The character group $\operatorname{Hom} (G_N, \C^*)$ consists of 
$$\chi_N^{a, b}(g_N^{r, s}) = \zeta_N^{ar+bs} \hspace{.3in} (a,b\in \Z/N\Z).$$
Then one has the eigen decomposition 
$$H^1_{\rm dR}(X_{N, \lm})=  \bigoplus_{a, b=1}^{N-1} H^1_{\rm dR}(X_{N, \lm})^{\chi^{a, b}_N}, $$
where $H^1_{\rm dR}(X_{N, \lm})^{\chi^{a, b}_N}$ denote the submodule on which for every $r,s$, the element $g_N^{r, s}$ acts by multiplication by $\chi_N^{a, b}(g_N^{r, s})$ (see \cite[Proposition 2.4]{AO}).
For every $a,b\in \{1, \ldots, N-1 \}$, set
\begin{align*}
& \omega^{a, b}_{N, \lm} = N \dfrac{x^ay^b}{1-x^N} \dfrac{dx}{x} = -N \dfrac{x^ay^b}{1-y^N} \dfrac{dy}{y}, \\
& \eta^{a, b}_{N, \lm} = -\dfrac{b}{N \lm} (1-y^N) \omega_{N, \lm}^{a, b} = -\dfrac{b}{\lm} x^a y^b \dfrac{1-y^N}{1-x^N} \dfrac{dx}{x} = \dfrac{b}{\lm} x^a y^b \dfrac{dy}{y}. 
\end{align*}
The differential form $\omega^{a, b}_{N, \lm}$ is holomorphic on $X_{N,\lambda}$, and $\eta^{a, b}_{N, \lm}$ is non-holomorphic but of the second kind (= meromorphic with zero residues everywhere). The collection of differential forms $\omega^{a, b}_{N, \lm}$ and $\eta^{a, b}_{N, \lm}$ forms a basis of $H^1_{\rm dR}(X_{N, \lm})^{\chi_N^{a, b}}$. 

Given $a, b \in \{1, \ldots, N-1 \}$, let $C_{N, \lm}^{a, b}$ be the smooth projective curve defined in affine coordinates by
\begin{equation}\label{eq of the quotient curves}
v^N=(-u)^a (1-u)^{N-a} (1- \lm u)^{N-b}.
\end{equation}
There exists a finite morphism 
\begin{equation}\label{def of phi}
\varphi^{a, b}_N \colon X_{N, \lm} \to C_{N, \lm}^{a, b}; \quad (u, v)= \left(-\dfrac{x^N}{1-x^N}, \dfrac{x^a y^b}{(1-x^N)y^N} \right).
\end{equation}
Let $G^{a,b}_N$ be the subgroup of $G_N$ defined by
$$G^{a,b}_N= \{g_N^{r,s} \in G_N \mid ar+bs=0\}.$$
If $\gcd(N, a)=1$ or $\gcd(N,b)=1$, then $X_{N, \lm}$ is generically Galois over $C_{N, \lm}^{a, b}$ and
$$\operatorname{Gal}(X_{N, \lm}/C_{N, \lm}^{a, b}) =G^{a,b}_N$$
is cyclic of order $N$ generated by $g^{-a^{-1}b, 1}_N$ or $g^{1,-ab^{-1}}_N$. There is an automorphism of $X_{N, \lm}$ of order $2$ defined by $(x, y) \mapsto (y, x)$, 
which induces an isomorphism 
\begin{align}
C_{N, \lm}^{a, b} = X_{N, \lm} /G_N^{a, b} \simeq X_{N, \lm}/G_N^{b, a} = C_{N, \lm}^{b, a}. \label{ab}
\end{align}

For any integer $a$, let $\langle a \rangle \in \{0, \ldots, N-1\}$ be the representative of $a \pmod N$. It is convenient to set
\[
C_N^{a,b} := C_N^{\langle a\rangle ,\langle b\rangle}
\]
for every integers $a,b$ that are not multiples of $N$. If $j\in\{1, \ldots, N-1\}$ and $a$ or $b$ are coprime to $N$, then we have $G_N^{\langle a\rangle, \langle b\rangle }=G_N^{\langle aj\rangle, \langle bj\rangle }$, 
which induces an isomorphism 
\begin{align}
C_N^{a, b} \simeq C_N^{aj, bj}.  \label{isom}
\end{align}

\section{Some properties of hypergeometric curves}\label{sec: new results about HG curves}
The goal of this section is to establish some further results about hypergeometric curves.

\subsection{Determination of the hyperelliptic cases}\label{sec: characterization of hyperelliptic cases}
In this subsection we will determine when a hypergeometric curve  
is hyperelliptic. This is important to put Theorems \ref{main:1} and Theorem \ref{main:2} in a better context, since the behaviour of the modified diagonal cycle for hyperelliptic curves is well understood. We also determine when a quotient $C_{N, \lm}^{a, b}$ of a hypergeometric curve is hyperelliptic. This latter result will not be used in the proofs of Theorems \ref{main:1} and \ref{main:2}, however, it may be helpful for future study of modified diagonal cycles of these quotients. The result is also relevant to Proposition \ref{torsion} in \S \ref{subsection: further remarks}. 

\begin{prop} \label{nonhyperelliptic}
Let $N \ge 3$ be a positive integer. 
For any $\lm \in S$, the curve $X_{N, \lm}$ is non-hyperelliptic.  
\end{prop}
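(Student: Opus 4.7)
My plan is to exploit the large abelian automorphism group $G_N = (\mathbb{Z}/N\mathbb{Z})^2 \subseteq \mathrm{Aut}(X_{N,\lambda})$, which acts faithfully via $(r,s)\cdot(x,y) = (\zeta_N^r x,\zeta_N^s y)$. The strategy is that if $X_{N,\lambda}$ were hyperelliptic, this group would have to descend to a finite subgroup of $\mathrm{PGL}_2(\mathbb{C})$ of a very restricted shape, and this will turn out to be impossible as soon as $N\ge 3$.

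I would begin by assuming for contradiction that $X_{N,\lambda}$ is hyperelliptic, with hyperelliptic involution $\iota$. Since the genus $(N-1)^2\geq 4>1$, a classical fact gives that $\iota$ is the unique involution with quotient $\mathbb{P}^1$ (one recovers $\iota$ from the canonical map, which factors through a degree-$2$ cover of a rational normal curve of degree $g-1$); consequently $\iota$ lies in the center of $\mathrm{Aut}(X_{N,\lambda})$. Passing to the quotient $X_{N,\lambda}/\langle\iota\rangle=\mathbb{P}^1$ then produces a well-defined homomorphism $\pi : \mathrm{Aut}(X_{N,\lambda})\to \mathrm{PGL}_2(\mathbb{C})$ with kernel exactly $\langle\iota\rangle$, and $\pi(G_N)$ is a finite abelian subgroup of $\mathrm{PGL}_2(\mathbb{C})$. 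A short classification (any element of order $\geq 3$ in $\mathrm{PGL}_2(\mathbb{C})$ has centralizer equal to a one-dimensional torus, forcing cyclicity once any factor has order $\geq 3$) shows that every such subgroup is either cyclic or isomorphic to the Klein four-group.

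I would then split into two cases according to whether $\iota\in G_N$ or not. If $\iota\notin G_N$, then $\pi$ is injective on $G_N$ and $\pi(G_N)\cong (\mathbb{Z}/N\mathbb{Z})^2$, which for $N\geq 3$ is neither cyclic nor Klein four. If instead $\iota\in G_N$, then $\iota$ has order $2$, forcing $N$ even with $N\ge 4$; after applying an automorphism of $G_N$ one may assume $\iota=(N/2,0)$, so $\pi(G_N)\cong \mathbb{Z}/(N/2)\mathbb{Z}\oplus \mathbb{Z}/N\mathbb{Z}$, of order $N^2/2\geq 8$ and hence neither cyclic (since $\gcd(N/2,N)=N/2>1$) nor Klein four. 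Either case contradicts the above classification, completing the argument. The only real subtlety I anticipate is justifying that $\iota$ is central, which follows from its uniqueness via the canonical map; the remaining steps are pure finite group theory and do not require any further geometric input about $X_{N,\lambda}$ beyond the explicit $G_N$-action.
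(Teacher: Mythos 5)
Your proof is correct, and while its skeleton (centrality of the hyperelliptic involution, the induced map to $\operatorname{Aut}(\mathbb{P}^1)=\operatorname{PGL}_2(\C)$, and a case split on whether the involution lies in $G_N$) matches the paper's, you handle both halves somewhat differently. For the case $\iota\notin G_N$ the paper invokes Klein's full classification of finite subgroups of $\operatorname{PGL}_2(\C)$ and checks that none contains $(\Z/N)^2$; you instead give a self-contained centralizer argument (an element of order $\geq 3$ has centralizer a one-dimensional torus), which shows directly that any finite abelian subgroup of $\operatorname{PGL}_2(\C)$ containing an element of order $\geq 3$ is cyclic -- in fact, since both groups you need to exclude contain elements of order $N\geq 3$, you never even need the Klein-four case of your stated classification. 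The more substantive divergence is in the case $\iota\in G_N$: the paper identifies $\iota$ with one of the three involutions of $\mu_N\times\mu_N$, writes down the quotient curve explicitly (e.g.\ $(1-x^{N/2})(1-y^N)=\lm x^{N/2}y^N$), and derives a contradiction from its genus $(N/2-1)(N-1)\neq 0$, whereas you stay entirely group-theoretic: $\pi(G_N)\cong G_N/\langle\iota\rangle\cong \Z/(N/2)\times\Z/N$ is a non-cyclic abelian group of order $N^2/2\geq 8$, which cannot embed in $\operatorname{PGL}_2(\C)$. Your route avoids the equation of $X_{N,\lm}$ and the quotient-genus computation altogether (using only faithfulness of the $G_N$-action), at the cost of the small bookkeeping that all three involutions give the same abstract quotient; the paper's route is more computational but also records the quotient curves themselves, which is in the spirit of its later analysis of quotients. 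The standard facts you rely on (uniqueness/centrality of the hyperelliptic involution, and that the kernel of $\operatorname{Aut}(X)\to\operatorname{Aut}(X/\iota)$ is exactly $\langle\iota\rangle$, i.e.\ the deck group of the degree-2 cover) are the same ones the paper uses implicitly, so there is no gap.
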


\begin{proof}
Suppose that $X_{N, \lm}$ is hyperelliptic. 
Let $\tau$ be its hyperelliptic involution. Since the genus of $X_{N, \lm}$ is greater than one, $\tau$ lies in the center of the automorphism group of $X_{N, \lm}$, so that we have $$\operatorname{Aut}(X_{N,\lm})/\langle \tau\rangle \subset   \operatorname{Aut}(X_{N,\lm}/\tau) =  \operatorname{Aut}(\mathbb{P}^1) = \operatorname{PGL}_2(\C).$$
First assume that $\tau \not \in \mu_N \times \mu_N$. 
Then we have $\mu_N \times \mu_N \subset \operatorname{Aut}(X_{N,\lm})/\langle \tau\rangle$.
By the well known characterization of the finite subgroups of $\operatorname{PGL}_2(\C)$ due to Klein, $\operatorname{Aut}(X_{N,\lm})/\langle \tau \rangle$ is isomorphic to one of the following: the cyclic group $C_n$ of order $n \ge 1$, the dihedral group $D_n$ of order $2n$ for $n \ge 2$, the alternating group $A_4$ or $A_5$, or the symmetric group $S_4$. However, these groups do not include any copy of the group $\mu_N \times \mu_N$. 

Next, we assume that $\tau \in \mu_N \times \mu_N$. 
Then $N$ is even and $\tau$ is given by one of the three elements of order 2 in $\mu_N \times \mu_N\subset \operatorname{Aut}(X_{N,\lambda})$, i.e., one of the following:
\begin{align*} 
\text{(i)}\ (x, y) \mapsto (-x, y), \quad \text{(ii)} \ (x, y) \mapsto (x, -y), \quad \text{(iii)} \ (x, y) \mapsto (-x, -y).  
\end{align*}
We only consider case (i), since the other cases are similar computations. In this case, $X_{N, \lm}/ \langle \tau \rangle$ is given by 
$$(1-x^{N/2}) (1-y^N)=\lm x^{N/2} y^N, $$
which has a genus equal to $(N/2-1)(N-1)$, which is not zero since $N \geq 3$.
\end{proof}

We now consider the quotients of $X_{N,\lambda}$.

\begin{prop} \label{hyperelliptic}
 Let $1\leq a,b\leq N-1$. Suppose $\gcd(N,a)=1$. Then the curve $C_{N, \lm}^{a, b}$ (defined by \eqref{eq of the quotient curves}) is hyperelliptic if and only if 
\begin{enumerate}
\item $a=b$ or $a+b=N$, or 
\item N=2m and b=m. 
\end{enumerate}
 \end{prop}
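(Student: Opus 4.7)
My plan is to prove both implications: the forward direction by exhibiting explicit degree-two morphisms to $\mathbb{P}^1$ in each case, and the reverse by analyzing how a hyperelliptic involution must interact with the cyclic cover structure.

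As preparation I would first derive the genus formula $g(C_{N,\lm}^{a,b}) = N - \gcd(N, b)$ (under $\gcd(N,a) = 1$) via Riemann-Hurwitz applied to the degree-$N$ cyclic cover $C_{N,\lm}^{a,b} \to \mathbb{P}^1_u$, using that the ramification at $0, 1, 1/\lm, \infty$ totals $4N - 2 - 2\gcd(N, b)$. For the forward direction, in the case $a = b$ I would introduce the $G_N^{a,a}$-invariant function $w = xy$ on $X_{N,\lm}$; using $x^N = -u/(1-u)$ and $y^N = 1/(1-\lm u)$ (deducible from the defining equation of $X_{N,\lm}$) I would obtain $w^N = -u/((1-u)(1-\lm u))$, equivalently the quadratic $\lm w^N u^2 + (1-(1+\lm)w^N)u + w^N = 0$ in $u$ over $\C(w)$; checking that its discriminant $1 - 2(1+\lm) w^N + (1-\lm)^2 w^{2N}$ is not a square in $\C(w)$ (for $\lm \in S$) gives $[\C(C):\C(w)] = 2$. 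Case $a + b = N$ is analogous, with $w = x/y$ invariant under $G_N^{a, N-a}$ and satisfying $w^N = -u(1-\lm u)/(1-u)$, producing the quadratic $\lm u^2 + (w^N - 1)u - w^N = 0$. For case $N = 2m$, $b = m$, I would set $W = v^2/(1-\lm u)$, so that $W^m = (-u)^a(1-u)^{2m-a}$; by Riemann-Hurwitz (using $\gcd(m, a) = 1$) the cyclic $m$-cover of $\mathbb{P}^1_u$ defined by this equation is $\mathbb{P}^1$, and the tower $\C(u) \subset \C(u, W) \subset \C(C)$ of degrees $m$ and $2$ yields the desired degree-two map.

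For the reverse direction, suppose $C = C_{N,\lm}^{a,b}$ is hyperelliptic of genus $\geq 2$ with hyperelliptic involution $\tau_h$. Since $\tau_h$ is central in $\operatorname{Aut}(C)$, it commutes with the cyclic generator $\sigma$ of $\operatorname{Gal}(C/\mathbb{P}^1_u)$, which has order $N$. If $\tau_h \in \langle \sigma \rangle$ then $N$ is even and $\tau_h = \sigma^{N/2}$; the quotient $C/\tau_h$ is the cyclic $(N/2)$-cover of $\mathbb{P}^1_u$ whose monodromies at $0, 1, 1/\lm, \infty$ are the reductions of $a, N-a, N-b, b$ modulo $N/2$, and by the genus formula this quotient is $\mathbb{P}^1$ iff $N/2 \mid b$, forcing $b = N/2$ (case (2)). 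If instead $\tau_h \notin \langle \sigma \rangle$, then $\tau_h$ descends to a non-trivial Möbius involution $\bar\tau_h \in \operatorname{PGL}_2(\C)$ permuting $\{0, 1, 1/\lm, \infty\}$, and the commutation $\tau_h \sigma = \sigma \tau_h$ forces the induced action of $\bar\tau_h$ on the monodromy group $\Z/N$ to be trivial, so $\bar\tau_h$ must preserve the monodromies $(a, N-a, N-b, b)$ pointwise. A non-trivial involution of a $4$-set has either $0$ or $2$ fixed points; enumerating the six possible swap-pairs and requiring equality of monodromy within each, after excluding $a \equiv N - a \pmod{N}$ (impossible for $\gcd(N,a) = 1$ and $N > 2$), leaves exactly the conditions $a = b$, $a + b = N$, or $b = N/2$, corresponding to cases (1) or (2).

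The main obstacle is the reverse direction, specifically the combinatorial enumeration it demands. The crucial conceptual point is that centrality of $\tau_h$, together with its commutation with $\sigma$, forces the induced action of the descended involution $\bar\tau_h$ on the monodromy group to be trivial, reducing the problem to a finite case analysis of monodromy-preserving permutations of the four-point branch set.
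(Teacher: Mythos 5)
Your argument is correct, and its skeleton matches the paper's: sufficiency via explicit degree-two maps and genus-zero quotients, necessity via centrality of the hyperelliptic involution $\tau$ and the dichotomy according to whether $\tau$ lies in the deck group $\overline{G_N^{a,b}}\cong \Z/N\Z$ of $C_{N,\lm}^{a,b}\to\mathbb{P}^1_u$. The genuine difference is in the case $\tau\notin\overline{G_N^{a,b}}$: the paper lets $\tau$ descend to the $u$-line and then \emph{explicitly enumerates} the M\"obius transformations permuting $\{0,1,1/\lm,\infty\}$, with separate lists for the special values $\lm\in\{-1,1/2,2,\zeta_6,\zeta_6^{-1}\}$ where the branch set has extra symmetries, checking each against the defining equation; you instead use that $\tau$ centralizes the deck generator $\sigma$, so conjugation by $\tau$ is trivial on the deck group, hence the descended involution preserves the canonical local monodromy exponents $(a,N-a,N-b,b)$ at $(0,1,1/\lm,\infty)$ pointwise, and since a nontrivial M\"obius involution fixes at most two points it must transpose some pair of branch points; the six transpositions give exactly $a=b$, $a+b=N$, or $2b\equiv 0 \pmod N$ (the pair $\{0,1\}$ being ruled out by $\gcd(N,a)=1$, $N>2$). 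This buys a uniform argument with no special-$\lm$ case analysis, at the cost of invoking the standard fact that a lift commuting with the deck group transforms inertia generators trivially. In the sufficiency direction the paper reduces to $(a,b)=(1,1),(1,N-1)$ via \eqref{ab} and \eqref{isom} and writes explicit hyperelliptic models, while you work directly with the invariants $w=xy$ or $w=x/y$; do record the step you left implicit, namely $\C(C_{N,\lm}^{a,b})=\C(u,w)$, which holds because $[\C(u,w):\C(u)]=N$ (the expression for $w^N$ has a simple zero at $u=0$), so that your irreducible quadratic for $u$ over $\C(w)$ really computes $[\C(C_{N,\lm}^{a,b}):\C(w)]=2$; with that (and the same silent restriction to genus $\geq 2$, i.e.\ $N\geq 3$, that the paper makes) your proof is complete.
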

 
 \begin{proof}
First, we assume that $a=b$ or $a+b=N$. 
By \eqref{ab} and $\eqref{isom}$, it suffices to show the case $(a, b)=(1, 1)$ and ($1, N-1)$. 
There are isomorphisms
\begin{align*}
C_{N, \lm}^{1, 1} & \xrightarrow{\simeq} \{(z,w)\mid w^2=\dfrac{(1+\lm-z^N)^2}{4 \lm^2} - \dfrac1{\lm}\}\\
(u,v) &\mapsto (z,w)=\left(\dfrac{(1-u)(1-\lm u)}{v} \ , u-\frac{1+ \lm -z^N}{2 \lm} \right)
\end{align*}
and
\begin{align*}
C_{N, \lm}^{1, N-1} &\xrightarrow{\simeq} \{(z,w) \mid w^2=\dfrac{(1-z^N)^2}{4 \lm^2} + \dfrac{z^N}{\lm}\}\\
(u,v) &\mapsto (z,w)=\left(\dfrac{v}{1-u} \ , u-\frac{1- z^N}{2 \lm} \right),
\end{align*}
hence $C_{N, \lm}^{1, 1}$ and $C_{N, \lm}^{1, N-1}$ are hyperelliptic. 

Secondly, we assume that $N=2m$ and $b=m$. 
Then the curve $C_{2m}^{a, m}$ has an involution $(u, v) \mapsto (u, -v)$. The quotient curve by this involution is given by
$$v^m=(-u)^a (1-u)^{N-a} (1- \lm u)^{m}, $$
whose genus is equal to zero. 
Therefore, $C_{2m}^{a, m}$ is hyperelliptic.

\medskip\par 
Conversely, suppose that $C_{N, \lm}^{a, b}$ is hyperelliptic.
Let $\tau$ be its hyperelliptic involution. The automorphism group of $C_{N, \lm}^{a, b}$ contains the group $\overline{G_N^{a,b}}:=G_N/G_{N}^{a, b}$.
 
\underline{Case (i)}: Suppose that $\tau \not \in \overline{G_N^{a,b}}$. In this case we follow an argument of Coleman for an analogous result for quotients of Fermat curves \cite[Proposition 8]{Coleman}. Being in the center of the automorphism group, the action of $\tau$ on $C_{N, \lm}^{a, b}$ commutes with the action of $\overline{G_N^{a,b}}$. Thus $\tau$ acts on $C_{N, \lm}^{a, b}/\overline{G_N^{a,b}}$, which is the $u$-line. Moreover, the action of $\tau$ on the $u$-line is nontrivial since $\tau \not \in \overline{G_N^{a,b}}$. This action must permute the branch locus $\{0, 1, 1/\lm, \infty \}$. 
If $\lm \neq \frac1{\lm}, 1-\lm, \frac{\lm}{\lm-1}, \frac1{1-\lm}, \frac{\lm-1}{\lm}$, i.e. $\lm \neq -1, 1/2, 2, \zeta_6, \zeta_6^{-1}$, it follows that the action of $\tau$ on the $u$-line has to be one of
\begin{align} \label{tau}
\text{(i)}\ u \mapsto \dfrac{1-u}{1-\lm u}, \quad \text{(ii)} \ u \mapsto \dfrac{1- \lm u}{\lm(1-u)}, \quad \text{(iii)} \ u \mapsto \dfrac{1}{\lm u}. 
\end{align}
If $\lm =\frac1{\lm}$, i.e. $\lm = -1$, the possibilities of $\tau$ are \eqref{tau} and 
$$u \mapsto \frac1u, \quad  u \mapsto u \lm, \quad u \mapsto \dfrac{1-\lm u}{1- u}, \quad u \mapsto \dfrac{\lm(1-u)}{1- \lm u}. $$
If $\lm =1-\lm$, i.e. $\lm=1/2$, the possibilities of $\tau$ are \eqref{tau} and 
$$u \mapsto \dfrac{1- \lm u}{1-\lm }, \quad  u \mapsto \dfrac{u}{u-1}, \quad u \mapsto \dfrac{1}{1-\lm u}, \quad u \mapsto \dfrac{1-u}{(\lm-1)u}. $$
If $\lm = \frac{\lm}{\lm-1}$, i.e. $\lm =2$, the possibilities of $\tau$ are \eqref{tau} and 
$$u \mapsto1-u, \quad  u \mapsto \dfrac{(1-\lm)u}{1-\lm u}, \quad u \mapsto \dfrac{\lm-1}{\lm (1-u)}, \quad u \mapsto \dfrac{\lm u-1}{\lm u}. $$
If $\lm =\frac{\lm -1}{ \lm}$, $\frac1{1 - \lm}$, i.e. $\lm=\zeta_6, \zeta_6^{-1}$, the possibilities of $\tau$ are \eqref{tau} and 
\begin{align*}
&u \mapsto 1-\lm u, \quad  u \mapsto \dfrac{u-1}{u}, \quad u \mapsto \dfrac{1- \lm}{1-\lm u}, \quad u \mapsto \dfrac{(\lm-1)u}{1-u}, \\
&u \mapsto \dfrac{\lm(1-u)}{\lm-1 }, \quad  u \mapsto \dfrac{1}{1-u}, \quad u \mapsto \dfrac{1-\lm u }{(1-\lm) u}, \quad u \mapsto \dfrac{ \lm u}{\lm u-1}.  
\end{align*}
The automorphism of the $u$-line given by $\tau$ must preserve the equation
$v=(-u)^a (1-u)^{N-a} (1- \lm u)^{N-b}$ of $C_{N, \lm}^{a, b}/\overline{G_N^{a,b}}$. Each case above now implies that $a=b$ or $a+b=N$. 

\underline{Case (ii)}: Suppose that $\tau \in \overline{G_N^{a,b}}$. 
Since $\overline{G_N^{a,b}} \simeq \Z/N\Z$, it follows that $N=2m$ and $\tau(u, v)=(u, -v)$. 
The quotient curve $C_{N, \lm}^{a,b}/\langle \tau \rangle$ is then given by 
$$v^m=(-u)^a(1-u)^{2m-a}(1- \lm u)^{2m-b}, $$
whose genus must be equal to zero. 
Therefore, we have $m \mid b$, i.e. $b=m$, which finishes the proof. 
\end{proof}

\subsection{Properties of the cusps}\label{sec: results about cusps}
Throughout, $\lambda\in \mathbb{P}^1\setminus\{0,1,\infty\}$. From now on we let $\rho$ be a fixed $N$-th root of $1-\lm$ in $\C$. 
On $X_{N, \lm}$, we have $4N$ points
\begin{alignat*}{2}
&a_i=\left([0 :1], [\zeta_N^i:1] \right), &&\quad b_i=\left([\zeta_N^i:1], [0 :1] \right),\notag \\
&c_{1, i}= ([1 : 0], [\rho^{-1} \zeta_N^i : 1]), &&\quad c_{2, i}= ([\rho^{-1} \zeta_N^i: 1], [1:0]),
\end{alignat*}
where $i=0, \ldots, N-1$. We call these $4N$ points the {\it cusp} points of $X_{N, \lm}$ and refer to the other points of $X_{N, \lm}$ as {\it affine} points. Note that the function $x$ (resp. $y$) has its zeros at the $a_i$ (resp. $b_i$) and its poles at the $c_{1,i}$ (resp. $c_{2,i}$).

Our goal in this subsection is to establish some results about the cusp points. These results will be needed in the proofs of Theorems \ref{main:1} and \ref{main:2}, which will be given in \S \ref{sec: proof of Thms 1 & 2}. 

We start with an analogue of the Manin-Drinfeld theorem for hypergeometric curves:
\begin{prop} \label{torsion2}
Let $A$ and $B$ be cusps on $X_{N, \lm}$.   
Then the point $[A]-[B] \in \jac(X_{N, \lm})$ is a torsion point of order dividing $N^2$. If $\{A,B\}$ is any of $\{b_i, c_{2, j}\}$, $\{a_i,c_{1, j}\}$, $\{a_i, a_j\}$, $\{b_i, b_j\}$, $\{c_{1, i}, c_{1, j}\}$ or $\{c_{2, i}, c_{2, j}\}$ for any $i,j$ then $[A]-[B]$ is annihilated by $N$.
\end{prop}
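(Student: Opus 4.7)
The strategy is to exhibit rational functions on $X_{N,\lm}$ whose divisors are supported on the cusps and to read off the required linear equivalences in $\jac(X_{N,\lm})$. The natural candidates are $x$, $y$, and the translates $x-\zeta_N^j$, $y-\zeta_N^j$, $x-\rho^{-1}\zeta_N^j$, $y-\rho^{-1}\zeta_N^j$; I would carry out a local analysis at each type of cusp using the defining equation $(1-x^N)(1-y^N)=\lm x^N y^N$. For example, near $a_i=(0,\zeta_N^i)$, writing $y=\zeta_N^i+u$ and expanding yields $u\sim -\tfrac{\lm \zeta_N^i}{N}x^N$, so $x$ is a local parameter at $a_i$. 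Similarly, $y$ is a local parameter at $b_j$ with $x-\zeta_N^j$ vanishing to order $N$; $1/x$ is a local parameter at $c_{1,i}$; and at $c_{2,j}$ the function $x-\rho^{-1}\zeta_N^j$ vanishes to order $N$. These computations should give
\begin{align*}
\operatorname{div}(x) &= \sum_i[a_i] - \sum_i[c_{1,i}], & \operatorname{div}(y) &= \sum_i[b_i] - \sum_i[c_{2,i}],\\
\operatorname{div}(x-\zeta_N^j) &= N[b_j] - \sum_i[c_{1,i}], & \operatorname{div}(y-\zeta_N^j) &= N[a_j] - \sum_i[c_{2,i}],\\
\operatorname{div}(x-\rho^{-1}\zeta_N^j) &= N[c_{2,j}] - \sum_i[c_{1,i}], & \operatorname{div}(y-\rho^{-1}\zeta_N^j) &= N[c_{1,j}] - \sum_i[c_{2,i}].
\end{align*}

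Translating these to $\jac(X_{N,\lm})$, one obtains $\sum_i[a_i]\sim\sum_i[c_{1,i}]$, $\sum_i[b_i]\sim\sum_i[c_{2,i}]$, and
\[N[a_j]\sim N[c_{1,j}]\sim \sum_i[b_i],\qquad N[b_j]\sim N[c_{2,j}]\sim \sum_i[a_i]\]
for every $j$. Thus every cusp $P$ falls into one of two types according to whether $N[P]\sim \sum_i[a_i]$ or $N[P]\sim \sum_i[b_i]$, and each of the six pairs listed in the statement consists of two same-type cusps, yielding $N([P]-[Q])\sim 0$. For the general bound, summing the relation $N[a_j]\sim \sum_i[b_i]$ over $j$ gives $N(\sum_i[a_i]-\sum_i[b_i])\sim 0$; hence for two cusps $P,Q$ of opposite types, $N([P]-[Q])\sim \pm(\sum_i[a_i]-\sum_i[b_i])$ is already $N$-torsion, so $[P]-[Q]$ has order dividing $N^2$.

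The only substantive step is the local analysis verifying the multiplicities; once the correct local parameter at each cusp is identified from the first-order expansion of the defining equation, the remainder is a formal manipulation in the divisor class group, and no deeper information about the geometry of $X_{N,\lm}$ is needed.
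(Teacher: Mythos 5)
Your proposal is correct and follows essentially the same route as the paper: both rest on the divisors of $x-\zeta_N^j$, $x-\rho^{-1}\zeta_N^j$, $y-\zeta_N^j$, $y-\rho^{-1}\zeta_N^j$ (your multiplicity computations agree with the paper's), and your two-type bookkeeping via $\operatorname{div}(x)$, $\operatorname{div}(y)$ is just a reorganization of the paper's explicit quotient functions such as $(x-\zeta_N^i)^N(y^N-\rho^{-N})/(y-\zeta_N^j)^N$, which realizes $N^2([b_i]-[a_j])$ directly as a principal divisor.
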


\begin{proof}
For $i=0, \ldots, N-1$, we have 
\begin{alignat*}{2} 
&\dvs (x-\zeta_N^i)= N b_i - \sum_{j=0}^{N-1} c_{1, j} \ , \quad &&\dvs (x- \rho^{-1} \zeta_N^i)= N c_{2, i} - \sum_{j=0}^{N-1}  c_{1, j}, \\
&\dvs (y-\zeta_N^i)= N a_i - \sum_{j=0}^{N-1}  c_{2, j} \ , 
\quad &&\dvs (y- \rho^{-1} \zeta_N^i)= N c_{1, i} - \sum_{j=0}^{N-1}  c_{2, j}.  
\end{alignat*}
Therefore, for any $i, j \in \{0, \ldots, N -1\}$, we have 
\begin{align}
&\dvs \left(\frac{x-\zeta_N^i}{x-\rho^{-1} \zeta^j_N}\right)= N (b_i - c_{2, j}), \quad \dvs \left(\frac{y-\zeta_N^i}{y-\rho^{-1} \zeta^j_N}\right)= N (a_i - c_{1, j}), \label{tor1}
\end{align}
so that $[b_i]-[c_{2, j}]$ and $[a_i]-[c_{1, j}]$ are annihilated by $N$. 
Therefore, $[a_i]-[a_j]$ and $[b_i]-[b_j]$ as well as $[c_{1, i}]-[c_{1, j}]$ and $[c_{2, i}]-[c_{2, j}]$ are also killed by $N$. 
On the other hand, we have 
\begin{align}
\dvs \left(\dfrac{(x-\zeta_N^i)^N (y^N-\rho^{-N})}{(y-\zeta_N^j)^N} \right)=N^2(b_i -a_j),  \label{tor2}
\end{align}
hence $[b_i]-[a_j]$ is killed by $N^2$. By \eqref{tor1} and \eqref{tor2}, we conclude that the remaining differences of two cusps are also killed by $N^2$.
\end{proof}

\begin{cor} \label{canonical}
Let $K(X_{N, \lm})$ be the canonical divisor of $X_{N, \lm}$. 
For every divisor $e$ of degree $1$ on $X_{N, \lm}$ supported on the cusps, we have
$$(2(N -1)^2 - 2)e = K(X_{N, \lm})$$
in $\ch_0(X_{N, \lm}) \otimes \Q$. 
\end{cor}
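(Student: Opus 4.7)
The plan is to reduce the statement to the observation that $K(X_{N,\lm})$ can be represented by a divisor supported on cusps. If we can find a canonical divisor $D = \sum_P n_P [P]$ with each $P$ a cusp, then in $\ch_0(X_{N,\lm}) \otimes \Q$ we have $[P] = [e]$ for every cusp $P$, since by Proposition \ref{torsion2} the class $[P] - [e]$ is torsion in the degree-zero part $\jac(X_{N,\lm})$ of $\ch_0$. It would then follow that
\[
K(X_{N,\lm}) = \Big(\sum_P n_P\Big)[e] = (\deg K)\,[e] = (2(N-1)^2 - 2)\,[e]
\]
in $\ch_0(X_{N,\lm}) \otimes \Q$, as required.

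To produce such a divisor, I would take $D := \dvs(dx)$. The function $x$ defines a finite morphism $x \colon X_{N,\lm} \to \mathbb{P}^1$ of degree $N$, and one has
\[
\dvs(dx) = x^{\ast}(-2[\infty]) + R,
\]
where $R$ is the ramification divisor of $x$. Both summands are supported on cusps: the fiber $x^{-1}(\infty)$ consists of the cusps $c_{1,0}, \ldots, c_{1,N-1}$ (each with multiplicity one, as the check below shows), and a routine implicit-function computation on the defining equation $F(x,y) := (1-x^N)(1-y^N) - \lm x^N y^N = 0$ shows that the ramification points of $x$ are exactly the cusps $b_j$ (where $\partial F/\partial y = 0$ in the affine chart forces $y=0$, $x^N = 1$) and $c_{2,j}$ (found by an analogous check in the chart with $y^{-1}$ as a coordinate), each with ramification index $N$; the fibers over $x=0$ (the $a_i$) and over $x=\infty$ (the $c_{1,i}$) are both unramified. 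This yields the explicit formula
\[
\dvs(dx) = (N-1)\sum_{j=0}^{N-1}[b_j] + (N-1)\sum_{j=0}^{N-1}[c_{2,j}] - 2\sum_{j=0}^{N-1}[c_{1,j}],
\]
a divisor supported entirely on cusps, of degree $2N(N-1) - 2N = 2(N-1)^2 - 2$, consistent with Riemann--Hurwitz.

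I do not anticipate any serious obstacle here: the only technical step is the ramification analysis of the projection $x$, which is entirely local and follows from computing partial derivatives of $F$ in the two natural affine charts (the one with $(x,y)$, and the one with $(x, y^{-1})$ used for the points $c_{2,j}$). Once that is in hand, the corollary is immediate from Proposition \ref{torsion2} together with the reduction outlined in the first paragraph.
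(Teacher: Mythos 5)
Your proposal is correct and follows essentially the same route as the paper: compute $\dvs(dx)=(N-1)\sum_i([b_i]+[c_{2,i}])-2\sum_i[c_{1,i}]$, a canonical divisor supported on cusps, and then invoke Proposition \ref{torsion2} to identify all cusps with $[e]$ in $\ch_0(X_{N,\lm})\otimes\Q$. The only difference is that you spell out the ramification analysis of the degree-$N$ projection $x$, which the paper states without detail; your computation (ramification of index $N$ at the $b_j$ and $c_{2,j}$, unramified fibers over $x=0,\infty$) is accurate.
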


\begin{proof}
We have
$$K(X_{N, \lm})=\text{div}(dx)=(N-1)\sum_{i=0}^{N-1} ([b_i]+[c_{2, i}])-2\sum_{i=0}^{N-1} [c_{1, i}]. $$
Thus by Proposition \ref{torsion2}, $K(X_{N, \lm}) - (2(N -1)^2 - 2)e $ is torsion in the Jacobian, which proves the assertion. 
\end{proof}

Our final result in this subsection is a result about the space of meromorphic functions with bounded poles at the cusps. The result may seem rather unmotivated by itself, but it will play a key role in the proof of Theorem \ref{main:1}.
For a positive integer $d$ and distinct points $e_0, \ldots, e_n \in X_{N, \lm}$, 
let
$$\mathscr{L}\left(d \displaystyle \sum_{i=0}^{n} [e_i]\right)$$
be the Riemann-Roch space of the divisor $d \displaystyle \sum_{i=0}^{n} [e_i]$, i.e., 
 the vector space (over the constants $\C$) of meromorphic functions on $X_{N,\lm}$ that are regular outside $\{e_i\mid i=0, \ldots, n\}$ and do not have a pole of order greater than $d$ at any $e_i$.
 
\begin{lem} \label{key1} 
Let $N=p$ be a prime number. Let $d$ be an integer such that $0\leq d \leq p-1$. Then we have the following statements:
\begin{enumerate}  
\item A basis for $\mathscr{L}\left(d \displaystyle \sum_{i=0}^{p-1} [c_{1, i}] \right)$ is given by $\{x^m \mid 0 \leq m \leq d\}$. 
\item A basis for $\mathscr{L}\left(d \displaystyle \sum_{i=0}^{p-1} [c_{2, i}] \right)$ is given by $\{y^m \mid 0 \leq m \leq d\}$.
\item A basis for $\mathscr{L}\left(d \displaystyle \sum_{i=0}^{p-1} [a_{i}] \right)$ is given by $\{x^{-m} \mid 0 \leq m \leq d\}$.
\item A basis for $\mathscr{L}\left(d \displaystyle \sum_{i=0}^{p-1} [b_{i}] \right)$ is given by $\{y^{-m} \mid 0 \leq m \leq d\}$.
\end{enumerate}
\end{lem}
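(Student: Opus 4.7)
The plan is to prove part (i) in detail and then deduce parts (ii)--(iv) by the analogous argument with the roles of $x$ and $y$ (and of $0$ and $\infty$ on the target line) interchanged. For (i), note first that the functions $x^m$ with $0 \le m \le d$ are linearly independent elements of $\mathscr{L}(d \sum_j [c_{1,j}])$: by the computation of $\dvs(x - \zeta_p^i)$ in the proof of Proposition~\ref{torsion2}, $x$ has simple zeros at the $a_i$ and simple poles at the $c_{1,j}$, so $x^m$ has a pole of order exactly $m$ at each $c_{1,j}$. The content of the lemma is the reverse containment.

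My main tool is the Kummer structure of the projection $x \colon X_{p,\lm} \to \mathbb{P}^1_x$. Rewriting the defining equation gives $y^p = (1-x^p)/(1-\rho^p x^p)$, so $\mathbb{C}(X_{p,\lm})/\mathbb{C}(x)$ is a cyclic Galois extension of degree $p$ whose Galois group is generated by the automorphism $g_p^{0,1} \colon (x,y) \mapsto (x, \zeta_p y)$. This produces the eigen-decomposition $\mathbb{C}(X_{p,\lm}) = \bigoplus_{b=0}^{p-1} \mathbb{C}(x)\, y^b$. Because $g_p^{0,1}$ cyclically permutes the $c_{1,j}$, the divisor $\sum_j [c_{1,j}]$ is invariant, and hence $\mathscr{L}(d \sum_j [c_{1,j}])$ inherits the eigen-decomposition. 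It therefore suffices to show that every component $f_b(x) y^b \in \mathscr{L}(d \sum_j [c_{1,j}])$ with $1 \le b \le p-1$ vanishes, and that the $(b=0)$-component is spanned by $\{1,x,\ldots,x^d\}$.

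This reduces to ramification bookkeeping. The cover $x$ has degree $p$, is unramified above $0$ (fibre $\{a_i\}$) and above $\infty$ (fibre $\{c_{1,j}\}$), and is totally ramified at each $b_i$ (above $\zeta_p^i$) and each $c_{2,j}$ (above $\rho^{-1}\zeta_p^j$). Combined with $\dvs(y) = \sum_i [b_i] - \sum_j [c_{2,j}]$, the condition $\dvs(f_b(x) y^b) \ge -d \sum_j [c_{1,j}]$ translates to: $f_b \in \mathbb{C}(x)$ is regular on $\mathbb{A}^1_x$, has a pole of order $\le d$ at $\infty$, and $\ord_{\rho^{-1}\zeta_p^j}(f_b) \ge \lceil b/p\rceil$ for every $j$ (this last condition because at $c_{2,j}$ the function $y^b$ has pole of order $b$, while the order at $c_{2,j}$ of a function of $x$ is multiplied by the ramification index $p$). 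For $1 \le b \le p-1$ the last condition forces $f_b$ to be a polynomial divisible by $\prod_j (x-\rho^{-1}\zeta_p^j)$, hence of degree $\ge p > d$, so $f_b = 0$. For $b=0$ the conditions simply say that $f_0 \in \mathbb{C}[x]$ with $\deg f_0 \le d$, exactly as desired.

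The only nontrivial point (and the main obstacle) is the ramification computation at the $c_{2,j}$: it is precisely the factor $p$ coming from total ramification that upgrades the a-priori inequality $\ord_{\rho^{-1}\zeta_p^j}(f_b) \ge b/p$ to the integer bound $\ge 1$, and the hypothesis $d \le p-1$ then forces $f_b = 0$ for $b \geq 1$. For parts (ii), (iii), (iv) I would apply the same argument to the projections $y$, $1/x$, $1/y \colon X_{p,\lm} \to \mathbb{P}^1$ respectively; each is again a cyclic cover of degree $p$ with the analogous combinatorial picture, and in each case the relevant cusp divisor is the unramified preimage of $\infty$, so the argument carries over verbatim.
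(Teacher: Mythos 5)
Your proposal is correct, but it takes a genuinely different route from the paper's proof. The paper follows Gross--Rohrlich \cite[Lemma 2.2]{GR}: it descends all the way to the subfield $k=\C(x^p)$, expands $f=\sum f_{m,n}x^m y^n$ with $f_{m,n}\in k$, uses the trace for the full group $G_p$ together with order estimates at the $a_i$, $b_i$, $c_{2,i}$ to show that each $f_{m,n}$ is a \emph{polynomial} in $x^p$, and then eliminates the unwanted monomials by a Vandermonde-type argument (a nonzero polynomial of degree $\le p-1$ cannot vanish at the $p$ points $\rho^{-1}\zeta_p^i$). You instead descend only to $\C(x)$: since $y^p=(1-x^p)/(1-\rho^p x^p)$, the map $x\colon X_{p,\lm}\to\mathbb{P}^1$ is a cyclic degree-$p$ cover with group $\langle g_p^{0,1}\rangle$, this group preserves $\sum_j[c_{1,j}]$, so $\mathscr{L}$ splits into eigencomponents $f_b(x)y^b$, and the lemma becomes divisor bookkeeping on the $x$-line. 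Your translation of the conditions is accurate --- note only that the regularity of $f_b$ at the points $\zeta_p^i$ is itself a consequence of the total ramification at $b_i$ (from $p\,\ord_{\zeta_p^i}(f_b)+b\ge 0$), the mirror image of the computation you highlight at the $c_{2,j}$ --- and the conclusion that $f_b=0$ for $1\le b\le p-1$, because $f_b$ would be a polynomial of degree $\le d\le p-1$ divisible by $x^p-\rho^{-p}$, is sound; parts (ii)--(iv) do carry over via the covers $y$, $1/x$, $1/y$ (or via the symmetry $(x,y)\mapsto(y,x)$) exactly as you indicate. What your approach buys is brevity and transparency: no trace computation is needed, only ramification indices and degree counts on $\mathbb{P}^1$. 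The paper's argument is the direct adaptation of the Fermat-curve computation, exploiting the full $(\Z/p)^2$-action rather than a single cyclic quotient, and arrives at the same explicit monomial description of the cusp-regular functions.
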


\begin{proof}
We only include the proof of (i) since the other statements are proved similarly. The first part of the proof is similar to Gross-Rohrlich's argument for \cite[Lemma 2.2]{GR}.
Let $K$ be the function field of $X_{p, \lm}$ and let $k$ be the subfield generated over the constants by $x^p$. 
Then $\{x^m y^n \mid 0 \leq m, n \leq p-1\}$ is a basis of $K$ over $k$ (recall that $\lambda\neq 0,1$). 
Suppose that 
$$f=\sum_{m, n} f_{m, n} x^m y^n, \quad f_{m, n} \in k \quad (0\leq m,n\leq p-1)$$ 
is regular outside the $c_{1, i}$ $(i=0, \ldots, p-1)$. We claim that for any $m, n \in \{0, \ldots, p-1\}$, $f_{m ,n}$ is a {\it polynomial} in $x^p$. Indeed, let $\operatorname{Tr}^K_k$ be the trace map from $K$ to $k$. 
Then we compute  
\begin{align*}
\operatorname{Tr}^K_k(x^my^n)&=\sum_{g_p^{a, b} \in G_p} g^{a, b}_p \cdot x^m y^n= \sum_{a,b\in \Z/p\Z} \zeta_p^{am+bn} x^m y^n \\
&=\left\{
\begin{array}{ll}
p^2 & \text{if} \ (m, n) \equiv (0, 0) \pmod{p}\\
0 & \text{otherwise},
\end{array}
\right.
\end{align*}
so that we have 
$$\operatorname{Tr}^K_k(f x^{-m}y^{-n})=p^2 f_{m, n}. $$
Thus we may show that $\operatorname{Tr}^K_k(f x^{-m}y^{-n})$ is a polynomial in $x^p$. Since $f x^{-m} y^{-n}$ is regular at any affine point of $X_{p, \lm}$, so is $\operatorname{Tr}^K_k(f x^{-m}y^{-n})$. On recalling the behavior of $f, x$ and $y$ at the $a_i$ and $b_i$, we have 
\begin{align*}
\ord_{a_i} (f x^{-m} y^{-n}) \geq -m > -p, \quad \ord_{b_i} (f x^{-m} y^{-n}) \geq -n > -p, 
\end{align*}
so that
\begin{align*}
\ord_{a_i} \operatorname{Tr}^K_k(f x^{-m}y^{-n}) >  -p,  \quad \ord_{b_i} \operatorname{Tr}^K_k(f x^{-m}y^{-n}) > -p.  
\end{align*}
Since $K$ is ramified over $k$ of order $p$ at $a_i$'s and $b_i$'s, we conclude that 
$$\ord_{a_i} \operatorname{Tr}^K_k(f x^{-m}y^{-n}) \geq 0, \quad \ord_{b_i} \operatorname{Tr}^K_k(f x^{-m}y^{-n}) \geq 0. $$
Furthermore, for any $i \in \{0, \ldots, p-1\}$, we have 
$$\ord_{c_{2, i}} (f x^{-m}y^{-n}) \geq n \geq 0, $$
hence $\ord_{c_{2, i}} \operatorname{Tr}^K_k(f x^{-m}y^{-n}) \geq 0$. 
Therefore, $\operatorname{Tr}^K_k(f x^{-m}y^{-n})$ (which is a priori a rational function in $x^p$) is regular on $X_{N,\lambda}$ outside the $c_{1, i}$'s. It follows that $\operatorname{Tr}^K_k(f x^{-m}y^{-n})$ is a polynomial in $x^p$, as claimed.

Now suppose that $f \in \mathscr{L}\left(d \displaystyle \sum_{i=0}^{p-1} [c_{1, i}]\right)$. The fact that the $f_{m,n}$ are polynomials in $x^p$ together with the bound on the orders of the poles of $f$ as the $c_{1,i}$ implies that the $f_{m,n}$ must be constants. That is, $f$ has a unique expression 
\begin{align*}
f= \sum a_{m, n} x^m y^n, \quad a_{m, n} \in \C, 
\end{align*}
where the sum is taken over $0 \leq m, n \leq p-1$.
We first show that $a_{m,n}$ is zero if $m>d$. To see this,
tentatively, let $e\geq 0$ be the largest integer such that $a_{e, n} \neq 0$ for some $n$. 
Write
$$f=\sum_{m=0}^{e}\left(\sum_{n = 0}^{p-1} a_{m, n} y^n \right) x^m. $$
Suppose $e>d$. Since $f$ has a pole of order at most $d$ at each $c_{1, i}$, for each $i \in \{0, \ldots, p-1\}$ we must have
 $$\sum_{n = 0}^{p-1} a_{e, n} (\rho^{-1} \zeta_p^i)^n =0. $$
 This is absurd because $\sum_{n = 0}^{p-1} a_{e, n} t^n\in \C[t]$ cannot have $p$ distinct roots in $\C$. 

To complete the argument, it remains to show that $a_{m, n}=0$ if $n > 0$. This is done by a similar argument as the last one: Take $e$ now to be the largest integer such that $a_{m, e}\neq 0$ for some $m$. Assuming $e>0$, since $f$ does not have poles at the $c_{2, i}$, for each $i=0, \ldots, p-1$ we get
 $$\sum_{m = 0}^{d} a_{m, e} (\rho^{-1} \zeta_p^i)^m =0, $$
which is absurd since $d\leq p-1$.
\end{proof}

\section{Proof of Theorems \ref{main:1} and \ref{main:2}}\label{sec: proof of Thms 1 & 2}

\subsection{Proof of Theorem \ref{main:1}}\label{sec: proof of thm 1}
Our goal in this subsection is to prove Theorem \ref{main:1}.
As before, let $\lambda\in \mathbb{P}^1\setminus\{0,1,\infty\}$ and $\rho$ a fixed $N$-th root of $1-\lambda$ in $\C$. The curve $X_{N, \lm}$ has an automorphism 
$$\alpha \colon X_{N, \lm} \to X_{N, \lm}; \quad (x, y) \mapsto \left(\frac1{\rho x}, \frac1{\rho y} \right), $$
which has order 2. If $N$ is odd, then $\alpha$ has exactly two fixed points 
$$P=\left({\xi}, -{\xi} \right), \quad Q=\left(-{\xi}, {\xi} \right),  $$
where $\xi^2=\rho^{-1}$.

Recall from \S \ref{more on hypergeometric curves} that $\varphi_N^{a,b}$ is the map \eqref{def of phi} from $X_{N,\lambda}$ to the quotient $C_{N,\lambda}^{a,b}$.

\begin{prop} \label{nontriv}
Let $p \neq 2$ be a prime number and $l$ a positive integer such that $l \leq (p-1)/2$. Let $\lambda\in \mathbb{P}^1\setminus\{0,1,\infty\}$ and $e$ be a divisor of degree 1 on $X_{p,\lambda}$ supported on one of the four sets of cusps $\{c_{1,i}: 0 \leq i<p\}$, $\{c_{2,i}: 0 \leq i<p\}$, $\{a_i: 0 \leq i<p\}$, or $\{b_i: 0 \leq i<p\}$. Then the point 
$$l \cdot (\varphi^{a, b}_p)_*([P] + [Q] -2e) \in \jac(C_{p, \lm}^{a, b})$$
 is nontrivial for every $a,b\in \{1,\ldots, p-1\}$.
\end{prop}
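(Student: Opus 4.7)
My plan is to argue by contradiction, reducing the triviality of the given point in $\jac(C^{a,b}_{p,\lambda})$ to the existence of a $G^{a,b}_p$-invariant meromorphic function on $X_{p,\lambda}$ with a very constrained divisor, and then invoking Lemma \ref{key1} to rule this out.

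Assume $l \cdot (\varphi^{a,b}_p)_*([P]+[Q]-2[e]) = 0$ in $\jac(C^{a,b}_{p,\lambda})$. Then there is a nonzero rational function $g$ on $C^{a,b}_{p,\lambda}$ whose divisor equals this multiple. Since $\gcd(a,p)=\gcd(b,p)=1$, the map $\varphi^{a,b}_p$ is Galois of degree $p$ with cyclic group $G^{a,b}_p$, and (as a direct check from the explicit $G_p$-action) each of $P$, $Q$ and every cusp of $X_{p,\lambda}$ has trivial stabilizer, so its orbit has exactly $p$ elements. Pulling $g$ back, $f := g \circ \varphi^{a,b}_p$ is a nonzero function on $X_{p,\lambda}$ with
$$\dvs(f) = l \sum_{h \in G^{a,b}_p}\bigl(h[P] + h[Q] - 2h[e]\bigr).$$

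I then split into cases by the type of cusp $e$. Take $e = c_{1,0}$ as representative; the three remaining cases $e \in \{c_{2,0},\, a_0,\, b_0\}$ are handled by the same scheme, using parts (ii), (iii), (iv) of Lemma \ref{key1} in place of (i). Because the projection $G^{a,b}_p \to \Z/p\Z$ onto the first factor is surjective (as $b$ is invertible mod $p$), the orbit $G^{a,b}_p\cdot e$ equals $\{c_{1,i}\}_{i=0}^{p-1}$, so $f\in\mathscr{L}\bigl(2l\sum_{i=0}^{p-1}[c_{1,i}]\bigr)$. Since $l \leq (p-1)/2$ gives $2l \leq p-1$, Lemma \ref{key1}(i) yields $f = \sum_{m=0}^{2l} a_m x^m$ with $a_m \in \C$. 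For each $h = g_p^{r,s}$ with $(r,s) \in G^{a,b}_p$, the first coordinate of $hP = (\zeta_p^r \xi, -\zeta_p^s \xi)$ is $\zeta_p^r \xi$; since $f$ depends only on $x$, the vanishing $f(hP) = 0$ reads
$$\sum_{m=0}^{2l}(a_m \xi^m)\,\zeta_p^{rm} = 0.$$
As $r$ runs over all of $\Z/p\Z$, this is a polynomial of degree $\leq 2l < p$ in $\zeta_p^r$ vanishing at $p$ distinct points, forcing $a_m = 0$ for all $m$ and contradicting $f \neq 0$.

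The key ingredient is Lemma \ref{key1}; the counting step at the end is tight in that the inequality $2l \leq p-1$ is exactly what is needed both for the lemma to apply and for a polynomial of degree $\leq 2l$ in $\zeta_p^r$ to be forced to vanish by its $p$ vanishing constraints. Since Lemma \ref{key1} is already established, the substantive obstacle has been overcome and the remainder is essentially bookkeeping across the four cuspidal cases.
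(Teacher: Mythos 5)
Your proposal is correct and follows essentially the same route as the paper: pull the cycle back along $(\varphi^{a,b}_p)^*\circ(\varphi^{a,b}_p)_*$, use Lemma \ref{key1} to write the resulting function as a polynomial of degree $\leq 2l\leq p-1$ in $x$ (or $y$, or their inverses, depending on the cuspidal family of $e$), and get a contradiction from its vanishing on the Galois orbit of $P$ --- the only difference being that the paper factors out $x^{2}-\rho^{-1}$ and contradicts the divisor of the quotient, while you count the $p$ distinct roots directly, which works just as well. (One tiny slip: for the orbit of $c_{1,0}$ it is the projection of $G^{a,b}_p$ onto the \emph{second} factor that must be surjective, which holds because $a$ is invertible mod $p$; this is harmless since both projections are surjective, and the surjectivity onto the first factor that you cite is exactly what you need later for the $x$-coordinates $\zeta_p^{r}\xi$ of the orbit of $P$.)
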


\begin{proof}
We only prove the case when $e$ is supported on the $c_{1,i}$ since the other cases are proved similarly. It suffices to show that the point
$$l \cdot (\varphi_p^{a, b})^* \circ (\varphi^{a, b}_p)_*([P] + [Q] -2e)=l \cdot \sum_{j=0}^{p-1} \left(g_p^{bj, -aj} \cdot ([P] + [Q] ) - 2 [c_{1, j}] \right)$$
of $\jac(X_{p,\lambda})$ is nontrivial. 
Here, the action of $g_p^{bj, -aj}$ in the right hand side is by pushforward of divisors. The equation uses the fact that $e$ has degree 1, as well as the fact that for every $0\leq i<p$, we have $(\varphi_p^{a, b})^* \circ (\varphi^{a, b}_p)_*([c_{1,i}]) = \sum_{j=0}^{p-1} [c_{1, j}]$. 

Let $K$ be the function field of $X_{p, \lm}$.
Suppose that the above point of $\jac(X_{p,\lambda})$ is trivial. Then there exists a function $f \in K$ such that 
$$\operatorname{div} (f)=  l \cdot \sum_{j=0}^{p-1} \left(g_p^{bj, -aj}  \cdot ([P] + [Q] ) - 2 [c_{1, j}] \right). $$
Since $f$ has poles of order $2l \leq p-1$ at $c_{1, j}$, by Lemma \ref{key1}, we can write 
$$f=\sum_{0\leq m \leq 2l} a_m x^m \quad \quad (a_m\in\C).$$ 
On the other hand, $f$ vanishes at $P$ and $Q$, hence $f$ is divisible by the polynomial $x^{2} -\rho^{-1}$ in the polynomial ring $\C[x]$, i.e., there exists a polynomial $g \in \C[x]\subset K$ such that 
$$f=(x^{2} -\rho^{-1})g. $$
The divisor of $x^2-\rho^{-1}$ is given by 
$$\operatorname{div}(x^{2}- \rho^{-1})= \sum_{j=0}^{p-1} \left(g_p^{0, j}\cdot  ([P] + [Q]) - 2[c_{1, j}] \right), $$
hence we have 
\begin{align} 
\begin{split}
\dvs(g) &=(l-1)([P]+[Q]) + l \sum_{j=1}^{p-1} g_p^{bj, -aj}  \cdot ([P]+ [Q]) \\
&- 2(l-1) \sum_{j=0}^{p-1} [c_{1, j}] -\sum_{j=1}^{p-1} g_p^{0, j} \cdot ([P]+[Q]).  
\end{split}\label{g}
\end{align}
It follows that $g$ has poles at $g_p^{0, j}(P)$ and $g_p^{0, j}(Q)$ for $0<j<p$ (note that these are not canceled by the zeros of $g$; also note that these are distinct from the points $c_{1, j}$ for $0\leq j<p$). But this is absurd, since $g$ is a polynomial in only $x$ and therefore it does not have any poles outside the $c_{1, j}$.
 \end{proof}

We will now deduce Theorem \ref{main:1}. 
In fact, we may allow the base $e$ to be slightly more general:
\begin{thm}\label{thm 1, more general}
Let $p \ge3$ be a prime, $\lm \in \mathbb{P}^1 \setminus \{0, 1, \infty\}$, and $e$ a divisor of $X_{p, \lm}$ of degree 1 with support contained in one of the four sets of cusps $\{c_{1,i}: 0\leq i<p\}$, $\{c_{2,i}:0\leq i<p\}$, $\{a_{i}: 0\leq i<p\}$, or $\{b_{i}: 0\leq i<p\}$. Then for every positive integer $l \leq (p-1)/2$, we have 
$$l \cdot \Phi_1(\Delta_{{\rm GKS}, e}(X_{p, \lm})) \neq 0, $$
where $\Phi_1(\Delta_{{\rm GKS}, e}(X_{p, \lm}))$ is the complex Abel-Jacobi image of $\Delta_{{\rm GKS}, e}(X_{p, \lm})$ in the intermediate Jacobian of the integral Hodge structure $H_3(X_{p, \lambda}^3)$.
\end{thm}
\begin{proof}

For brevity, we set $X=X_{p, \lm}$.
We take $\Gamma_{\alpha} \in \ch_1(X \times X)$ to be the graph of $\alpha$.
Let $\pr_{123} \colon X^3 \times X \to X^3$ (resp. $\pr_4 \colon X^3 \times X \to X$) be the natural projection to the first three factors of $X$ (resp. the fourth factor of $X$). 
Let $\Delta \in \ch_1(X \times X)$ be the diagonal cycle.  
Then the correspondence 
$$Z=\Gamma_{\alpha} \times \Delta \in \ch_2(X^2 \times X^2)=\ch_2(X^3 \times X)$$
gives a map
$$\Pi_{Z} \colon \ch_1(X^3)_{\rm hom} \to \ch_0(X)_{\rm hom}; \quad \Omega \mapsto (\pr_4)_*(\pr_{123}^*(\Omega) \cdot (\Gamma_{\alpha} \times \Delta).$$
Thanks to the functoriality of Abel-Jacobi maps, the map $\Pi_Z$ fits into a commutative diagram
\begin{equation}\label{diagram for pf of Thm 1}
\begin{tikzcd}
    \ch_1(X^3)_{\rm hom} \arrow[r, "\Pi_Z"] \arrow[d] & \ch_0(X)_{\rm hom} \arrow[d] \\
    J_1(X^3) \arrow[r] & J_0(X),
\end{tikzcd}
\end{equation}
where the vertical maps are the Abel-Jacobi maps and the bottom horizontal map is induced by the Hodge class of $\Pi_Z$. A straightforward computation shows
\begin{align*} \label{DRS}
\Pi_{Z}(\Delta_{{\rm GKS}, e}(X))= [P] + [Q] -2\alpha_*(e). 
\end{align*}
For a divisor $e$ with support in one of the four sets $\{c_{1,i}\}$, $\{c_{2,i}\}$, $\{a_{i}\}$, or $\{b_{i}\}$, the support of $\alpha_*(e)$ is also in one of the same sets.
Thus $l \cdot \Pi_{Z}(\Delta_{{\rm GKS}, e}(X))$ is nontrivial for $l\leq (p-1)/2$ by Proposition \ref{nontriv}. The theorem now follows from the commutative diagram \eqref{diagram for pf of Thm 1} since the Abel-Jacobi map on the right side of the diagram is an isomorphism.
\end{proof}
\subsection{Proof of Theorem \ref{main:2}}
We now prove Theorem \ref{main:2}. 

The automorphism group $\operatorname{Aut}(X_{3,\lambda})$ acts naturally on $\wedge^3 H^1_{\rm dR}(X_{3,\lambda})$. For a subgroup $G \subset \operatorname{Aut} (X_{3,\lambda})$, we write the fixed part of $G$ for this action as $(\wedge^3H^1_{\rm dR}(X_{3,\lambda}))^G$.  
Since $G_3=\Z/3\Z\times \Z/3\Z \subset {\operatorname{Aut}(X_{3,\lambda})}$, 
we have 
$$(\wedge^3H^1_{\rm dR}(X_{3,\lambda}))^{\operatorname{Aut}(X_{3,\lambda})} \subset (\wedge^3H^1_{\rm dR}(X_{3,\lambda}))^{G_3}, $$
hence by Corollary \ref{canonical} and the criterion of Theorem \ref{criteria}, to prove Theorem \ref{main:2} it suffices to show that $(\wedge^3H^1_{\rm dR}(X_{3,\lambda}))^{G_3}=0$. Recalling the decomposition of $H^1_{\rm dR}(X_{3,\lambda})$ according to the characters of $G_3$ from \S \ref{more on hypergeometric curves}, we have
\begin{align*}
&(\wedge^3 H^1_{\rm dR}(X_{3,\lambda}))^{G_3} 
= \bigoplus_{\substack{a_i, b_i=1, 2 \\ \sum_i a_i=\sum_i b_i=0}}
H^1_{\rm dR}(X_{3,\lambda})^{\chi_3^{a_1, b_1}} \wedge H^1_{\rm dR}(X_{3,\lambda})^{\chi_3^{a_2, b_2}}  \wedge H^1_{\rm dR}(X_{3,\lambda})^{\chi_3^{a_3, b_3}}  \\
&=
\langle \varphi_1^{1, 1} \wedge \varphi_2^{1, 1} \wedge \varphi_3^{1, 1}, \varphi_1^{2, 2} \wedge \varphi_2^{2, 2} \wedge \varphi_3^{2, 2}, \varphi_1^{1, 2} \wedge \varphi_2^{1, 2} \wedge \varphi_3^{1, 2}, \varphi_1^{2, 1} \wedge \varphi_2^{2, 1} \wedge \varphi_3^{2, 1}\rangle, 
\end{align*}
where $\varphi^{a, b}_i$ is $\omega_{3, \lm}^{a, b}$ or $\eta_{3, \lm}^{a, b}$ for each $i$. It follows that $(\wedge^3 H^1_{\rm dR}(X_{3,\lambda}))^{G_3}$ is indeed zero.

\begin{rmk}
For $p\geq 5$, it is not difficult to write an explicit nonzero element of $H^3_{\rm dR}(\jac(X_{p,\lambda}))$ (in fact, an element of the primitive part) that is fixed by $\Z/p\Z\times \Z/p\Z$, the automorphism $\alpha$, and the automorphism given by $(x,y)\mapsto (y,x)$.
\end{rmk}

\subsection{Some further remarks}\label{subsection: further remarks}
Here we make a few supplementary comments on Theorem \ref{main:1}. Let $p$ be a prime number $>3$ and $e$ a cusp of $X_{p,\lambda}$. If the image of the point $[P]+[Q]-2[e]$ in the Jacobian of one quotient curve $C_{p,\lm}^{a,b}$ 
is non-torsion, then the argument in \S \ref{sec: proof of thm 1} together with Proposition \ref{torsion2} implies that $\Delta_{{\rm GKS}, e'} (X_{p, \lm})$ is non-torsion for every choice of
a degree 1 base divisor $e'$. 
We do not know how to prove that $(\varphi_{p}^{a,b})_\ast([P]+[Q]-2[e])$ is non-torsion for any $a,b$. 
However, in the cases when $(a, b)$ is $(1, 1)$ or $(1, p-1)$ (which are the hyperelliptic cases, see Proposition \ref{hyperelliptic}), we have the following:

\begin{prop} \label{torsion}
Let $p$ be a prime number and $(a, b)\in\{(1, 1),(1, p-1)\}$.
Then for any divisor $e$ of degree 1 on $X_{p, \lm}$ supported on the cusps, the point 
$$(\varphi^{a, b}_p)_* ([P]+[Q]-2e) \in \jac(C_{p, \lm}^{a, b})$$
is torsion for every $\lm \in \mathbb{P}^1\setminus\{0,1,\infty\}$.\footnote{Note this it is not clear whether this statement can be deduced from the torsion-ness of the modified diagonal cycles of hyperelliptic curves. Indeed, it is not obvious whether the point $(\varphi^{a, b}_p)_* ([P]+[Q]-2e)$ of $\jac(C_{p, \lm}^{a, b})$ is of the form $\Pi_{Z}(\Delta_{{\rm GKS}, e}(C_{p, \lm}^{a, b}))$ for some algebraic cycle $Z$ on $(C_{p, \lm}^{a, b})^4$.}
\end{prop}

\begin{proof}
By Proposition \ref{torsion},  it suffices to show the case $e=[c_{1, 0}]$. 
Since 
\begin{align*}
&p \cdot (\varphi^{1, 1}_p)^* \circ  (\varphi^{1, 1}_p)_* ([P]+[Q]-2[c_{1, 0}]) = \dv ((\rho^{-1} + xy)(1+x^p)), \\ 
&p \cdot (\varphi^{1, p-1}_p)^* \circ (\varphi^{1, p-1}_p)_* ([P]+[Q]-2[c_{1, 0}]) = \dv ((x-\rho^{-1}y)(1+x^p)),  
\end{align*}
we have $(\varphi^{a, b}_p)^* \circ (\varphi^{a, b}_p)_* ([P]+[Q]-2[c_{1, 0}])=0$ in $\jac(X_{p, \lm}) \otimes \Q$ for $(a, b)=(1, 1), (1, p-1)$. The proposition follows since the map
$$(\varphi^{a, b}_p)^*  \colon \jac(C_{p, \lm}^{a, b}) \otimes \Q \to \jac(X_{p, \lm}) \otimes \Q$$
is injective. 
\end{proof}

We end the paper with a comment about the case of the curves $X_{N,\lm}$ when $N$ is not prime. Theorem \ref{main:1} has the following corollary:

\begin{cor}
Suppose that $N$ has a prime divisor $p \ge 3$ such that $N \le p \sqrt{(p-1)/2}$. 
Then for any cusp $e \in X_{N, \lm}$ and $\lm \in \mathbb{P}^1 \setminus \{0, 1, \infty\}$, we have 
$$\Phi_1(\Delta_{{\rm GKS}, e}(X_{N, \lm})) \neq 0, $$
where $\Phi_1$ is the Abel-Jacobi map $\ch_1(X_{N,\lambda}^3)_{\rm hom}\rightarrow J_3(X_{N,\lm})$.
\end{cor}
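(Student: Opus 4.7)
The plan is to reduce to Theorem~\ref{main:1} via a natural covering map from $X_{N,\lm}$ to $X_{p,\lm}$. Since the defining equation of $X_{N,\lm}$ depends only on $x^N=(x^{N/p})^p$ and $y^N=(y^{N/p})^p$, the assignment
\[
\pi\colon X_{N,\lm}\longrightarrow X_{p,\lm},\qquad (x,y)\longmapsto (x^{N/p},\,y^{N/p}),
\]
defines a finite morphism of degree $(N/p)^2$. By inspecting the four possibilities $x\in\{0,\infty\}$ and $y\in\{0,\infty\}$, one sees that $\pi$ sends cusps of $X_{N,\lm}$ to cusps of $X_{p,\lm}$; in particular $\pi(e)$ is a cusp of $X_{p,\lm}$.

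Next I would compute the pushforward of the modified diagonal cycle along $\pi^{\times 3}\colon X_{N,\lm}^3\to X_{p,\lm}^3$. Each of the seven building blocks $\Delta_{123}$, $\Delta_{ij}$, $\Delta_k$ of $\Delta_{{\rm GKS},e}(X_{N,\lm})$ is a copy of $X_{N,\lm}$ which is mapped onto the analogous component of $\Delta_{{\rm GKS},\pi(e)}(X_{p,\lm})$, and after the natural identifications of source and target with $X_{N,\lm}$ and $X_{p,\lm}$ respectively, the induced map is precisely $\pi$. Since $\deg \pi=(N/p)^2$, summing with the correct signs gives
\[
(\pi^{\times 3})_\ast\,\Delta_{{\rm GKS},e}(X_{N,\lm})\;=\;(N/p)^2\,\Delta_{{\rm GKS},\pi(e)}(X_{p,\lm})
\]
in $\ch_1(X_{p,\lm}^3)_{\rm hom}$.

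Finally, the hypothesis $N\le p\sqrt{(p-1)/2}$ is equivalent to $(N/p)^2\le (p-1)/2$. Since $\pi(e)$ is a cusp, Theorem~\ref{main:1} applied with $l=(N/p)^2$ yields
\[
(N/p)^2\cdot\Phi_1\bigl(\Delta_{{\rm GKS},\pi(e)}(X_{p,\lm})\bigr)\neq 0.
\]
By the standard compatibility of the Griffiths Abel-Jacobi map with proper pushforward, the left-hand side equals $(\pi^{\times 3})_\ast\Phi_1\bigl(\Delta_{{\rm GKS},e}(X_{N,\lm})\bigr)$, which therefore must also be nonzero; in particular $\Phi_1(\Delta_{{\rm GKS},e}(X_{N,\lm}))\neq 0$. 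The only step requiring any care is the cycle-level identification of $(\pi^{\times 3})_\ast\Delta_{{\rm GKS},e}(X_{N,\lm})$, which is entirely routine, so I do not anticipate any genuine obstacle in this argument.
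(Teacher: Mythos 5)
Your proposal is correct and takes essentially the same route as the paper: the paper uses exactly the same degree-$(N/p)^2$ covering $(x,y)\mapsto(x^{N/p},y^{N/p})$, the pushforward identity $f_*\bigl(\Phi_1(\Delta_{{\rm GKS},e}(X_{N,\lm}))\bigr)=(N/p)^2\,\Phi_1(\Delta_{{\rm GKS},f(e)}(X_{p,\lm}))$, and Theorem \ref{main:1} with $l=(N/p)^2\le (p-1)/2$. Your cycle-level verification of the pushforward of the seven components is just a slightly more explicit rendering of the same computation.
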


\begin{proof}
Consider the map
$$f \colon X_{N, \lm} \to X_{p, \lm}; \quad ([x_1 :x_2], [y_1:y_2]) \mapsto ([x_1^{N/p} :x^{N/p}_2], [y^{N/p}_1:y^{N/p}_2]). $$
Then we have 
$$f_* \left( \Phi_1(\Delta_{{\rm GKS}, e}(X_{N, \lm}))\right)=  (N/p)^2 \cdot \Phi_1(\Delta_{{\rm GKS}, f(e)}(X_{p, \lm})),$$
where $\Phi_1$ denotes the Abel-Jacobi maps for both $X_{N,\lm}$ and $X_{p,\lm}$. Since $(N/p)^2 \le (p-1)/2$, by Theorem \ref{main:1}, the right-hand side is nontrivial. Thus the assertion follows. 
\end{proof}

\section*{Acknowledgments}
The authors would like to express sincere gratitude to the anonymous referees for their many helpful comments. This article was written while the second author was visiting the University of Winnipeg. He would like to thank the Department of Mathematics and Statistics for their hospitality. The second author was supported by Waseda University Grant for Early Career Researchers (Project number: 2025E-041). This work was partially supported by the NSERC Discovery Grant ``Arithmetic and Geometry of Mixed Motives".

\end{document}